\title{Open images of spaces with a Lusin $\pi\nos$-base\footnote{2020 Mathematics Subject Classification\textup{:} Primary 54{E}99; Secondary 54{C}10. Keywords\textup{:} pi-space, Lusin pi-base, the Baire space, Souslin scheme, open map}
}
\author{Mikhail Patrakeev\footnote{Krasovskii Institute of Mathematics and Mechanics of UB RAS, 620108, Yekaterinburg, Russia; \textit{e-mail address}\textup{:} p17533@gmail.com}\;,
Vlad Smolin\footnote{Krasovskii Institute of Mathematics and Mechanics of UB RAS, 620108, Yekaterinburg, Russia and Ural Federal University, Mathematical Analysis Department, 620002 Ekaterinburg, Russia; \textit{e-mail address}\textup{:} SVRusl@yandex.ru}}
\date{September 2021}
\theoremstyle{plain}
\newtheorem{teo}{Theorem}
\newtheorem{lemm}[teo]{Lemma}
\newtheorem{prop}[teo]{Proposition}
\theoremstyle{definition}
\newtheorem{deff}[teo]{Definition}
\newtheorem{nota}[teo]{Notation}
\newtheorem{rema}[teo]{Remark}
\newtheorem{ques}[teo]{Question}
\newcommand{\nos}{\mathsurround=0pt}
\renewcommand{\ll}{\langle}
\newcommand{\rr}{\rangle}
\renewcommand{\l}{\lceil}
\renewcommand{\r}{\rceil}
\renewcommand{\leq}{\leqslant}
\renewcommand{\geq}{\geqslant}
\newcommand{\bset}{{}^\omega\hspace{-0.5pt}\omega}
\newcommand{\cset}{{}^\omega2}
\newcommand{\bspace}{\mathcal{N}}
\newcommand{\btree}{{}^{{<}\omega}\hspace{-0.5pt}\omega}
\newcommand{\ctree}{{}^{{<}\omega}2}
\newcommand{\cl}[2]{\mathsf{Cl}_{#2}(#1)}
\renewcommand{\int}[2]{\mathsf{Int}_{#2}(#1)}
\newcommand{\fruit}[2]{\mathsf{fruit}_{#2}(#1)}
\newcommand{\shoot}[2]{\mathsf{shoot}_{#2}(#1)}
\newcommand{\uph}{{\upharpoonright}\hspace{0.5pt}}
\newcommand{\taun}{\tau_{\scriptscriptstyle\mathcal{N}}}
\DeclareMathOperator{\lh}{\mathsf{length}}
\newcommand{\inn}{\to}
\newcommand{\ninn}{\not\to}
\begin{document}

\maketitle

\begin{abstract}
In \cite{PATRAKEEV2023108410} we studied spaces with a Lusin $\pi$-base and $\pi$-spaces and posed the following question: Does the class of continuous open images of spaces with a Lusin $\pi$-base equal the class of continuous open images of $\pi$-spaces? We give a negative answer to this question.
\end{abstract}

\section{Introduction}
In \cite{PATRAKEEV2023108410} we introduced the notion of $\pi$-spaces: these are topological spaces that can be mapped onto the Baire space  (i.e. the countable power of the countable discrete space) by a continuous quasi-open bijection. A topological space is a continuous open image of a $\pi$-space if and only if it is a Choquet space of countable $\pi$-weight and of cardinality not greater than continuum~\cite{PATRAKEEV2023108410}. A second-countable space is a continuous open image of a $\pi$-space if and only if it is a continuous open image of a space with a Lusin $\pi$-base (and if and only if it is a Choquet space of cardinality not
greater than continuum)~\cite{PATRAKEEV2023108410}. The last result motivates the following question: does the class of continuous open images of $\pi$-spaces equal the class of continuous open images of spaces with a Lusin $\pi$-base?

We give a negative answer to the above question by constructing a zero-dimensional $\pi$-space that is not a continuous open image of a space with a Lusin $\pi$-base, see Theorem \ref{teo.main}. To achieve this result we give a description of open images of spaces with a Lusin $\pi$-base: these are topological spaces of cardinality not grater than continuum and with an  $\alpha$-scheme, see Theorem~\ref{desc_open_LPB}. 
\bigskip

\section{Notation and terminology}

We use terminology from~\cite{topenc} and~\cite{kunen2014set}. A \emph{space} is a topological space. We also use the following notation.

\begin{nota}\label{not01}%
  The symbol $\coloneq$ means ``equals by definition''\textup{;}
  the symbol ${\colon}{\longleftrightarrow}$ is used to show that the expression on the left side is an abbreviation for the expression on the right side\textup{;}
  \begin{itemize}
  \item [\ding{46}\,]
 $\mathsurround=0pt
 \omega$ $\coloneq$ the set of finite ordinals $=$ the set of natural numbers, so $0=\varnothing\in\omega$ and ${n}=\{0,\ldots,{n}-1\}$ for all ${n}\in\omega;$
  \item [\ding{46}\,]
 $\mathsurround=0pt
 {s}\,$ is a \textit{sequence}$\ {\colon}{\longleftrightarrow}\
 {s}$ is a function whose domain is a finite ordinal or is $\omega$;
\item [\ding{46}\,]
 if ${s}$ is sequence, then

 $\mathsurround=0pt
 \lh({s})\:\coloneq$ the domain of ${s}$;
 \item [\ding{46}\,]
   $\mathsurround=0pt
   \langle{s}_0,\ldots,{s}_{{n}-1}\rangle$ $\coloneq$
   the sequence ${s}$ such that  $\lh({s})={n}\in\omega$ and  ${s}({i})={s}_{i}$ for all ${i}\in{n};$
 \item [\ding{46}\,]
   $\mathsurround=0pt
   \langle\rangle$ $\coloneq$ the sequence of length {0};
 \item [\ding{46}\,]
   if ${s}=\langle{s}_0,\ldots,{s}_{{n}-1}\rangle$, then

   $\mathsurround=0pt
   {s}\hspace{0.5pt}^{\frown}{x}
   \:\coloneq\:\langle{s}_0,\ldots,{s}_{{n}-1},{x}\rangle$;
\item [\ding{46}\,]
 $\mathsurround=0pt
 {f}\uph{A}$ $\coloneq$ the restriction of the function ${f}$ to the set ${A};$

 \item [\ding{46}\,] ${g}\circ {f}$ is the composition of functions ${g}$ and ${f}$ (that is, ${g}$ after ${f}$);
  \item [\ding{46}\,]
 $\mathsurround=0pt
 {A}\subset{B}
 \ {\colon}{\longleftrightarrow}\
 {A}\subseteq{B}\enskip\text{and}\enskip{A}\neq{B};$
  \item [\ding{46}\,]
 if ${s}$ and ${t}$ are sequences, then

 $\mathsurround=0pt
 {s}\sqsubseteq{t}
 \ {\colon}{\longleftrightarrow}\
 {s}={t}\uph\lh({s})\ $ and

 $\mathsurround=0pt
 {s}\sqsubset{t}
 \ {\colon}{\longleftrightarrow}$
 ${s} \sqsubseteq{t}$ and ${s}\neq{t}$

 (actually, ${s}\sqsubseteq{t}\leftrightarrow{s}\subseteq{t}$
 and  ${s}\sqsubset{t}\leftrightarrow{s}\subset{t}$);
  \item [\ding{46}\,]
 $\mathsurround=0pt
 {}^{B}\!{A}$ $\coloneq$ the set of functions from ${B}$ to ${A};$

 in particular, ${}^{0}\hspace{-1pt}{A}=\big\{\langle\rangle\big\};$
\item [\ding{46}\,]
 $\mathsurround=0pt
 {}^{{<}\hspace{0.5pt}\omega}\hspace{-1.5pt}{A}
 \:\coloneq\:\bigcup_{{n}\in\omega}{}^{{n}}\hspace{-1pt}{A}\,=\,$
 the set of finite sequences in ${A};$
  \item [\ding{46}\,]
 $\mathsurround=0pt
 [{A}]^\kappa$ $\coloneq$ the set of subsets of ${A}$ of cardinality $\kappa$;

\item [\ding{46}\,]
 if ${p}$ is a point in a space with topology $\tau$, then

 $\mathsurround=0pt
 \tau({p})$ $\coloneq$ $\{{U}\in\tau:{p}\in{U}\}$ $=$
 the set of open neighbourhoods of ${p}$;
  \item [\ding{46}\,]
 $\mathsurround=0pt
 \gamma\,$ is a $\pi\mathsurround=0pt$-\textit{net} for a space ${X}
 \ {\colon}{\longleftrightarrow}\ $ all elements of $\gamma$ are nonempty and for each nonempty open ${U}\subseteq{X}$, there is ${G}\in\gamma$ such that ${G}\subseteq{U}$;
  \item [\ding{46}\,]
 $\mathsurround=0pt
 \gamma\,$ is a $\pi\mathsurround=0pt$-\textit{base} for a space ${X}
 \ {\colon}{\longleftrightarrow}\ \gamma$ is a $\pi\mathsurround=0pt$-net for ${X}$ and all elements of $\gamma$ are open;
  \item [\ding{46}\,]
  $\mathsurround=0pt
  \taun$ $\coloneq$  the Tychonoff product topology on the set $\bset$, where $\omega$ carries the discrete topology;
  \item [\ding{46}\,]
  $\mathsurround=0pt
  \bspace$ $\coloneq$ the \emph{Baire space} $=$ the space $\langle\bset,\taun\rangle$;
  \item [\ding{46}\,]
  $\mathsurround=0pt
  \mathfrak{c}$ $\coloneq$ the cardinality of the continuum.
  \end{itemize}
\end{nota}

\begin{nota}
  Let $\ll {X}, \tau \rr$ be a space and ${A} \subseteq X$. Then

  \begin{itemize}
  \item [\ding{46}\,] $\cl{A}{\tau} \coloneq $ the closure of ${A}$ in $\ll {X}, \tau \rr$;
  \item [\ding{46}\,] $\int{A}{\tau} \coloneq $ the interior of ${A}$ in $\ll {X}, \tau \rr$.
  \end{itemize}
 If the topology is clear from a context, then we omit an index in the above notations $\mathsf{Cl}$ and $\mathsf{Int}$.
\end{nota}

Recall that, in \cite{kechris2012classical}, a \emph{Souslin scheme}  is an indexed family $\langle V_a \rangle_{a \in \btree}$ of sets.

\begin{deff}
 Let ${\bf V} = \langle V_a \rangle_{a \in \btree}$ be a Souslin scheme, $\ll{X},\tau\rr$ be a space, and $p \in \bset$. Then

 \begin{itemize}
 \item[\ding{46}\,] ${\bf {V}}$ has nonempty leaves $\ {\colon}{\longleftrightarrow}\ $ $V_{a} \neq \varnothing$ for all $a \in \btree$;
 \item [\ding{46}\,]  ${\bf V}$  {\it covers} ${X}\ {\colon}{\longleftrightarrow}\  V_{\langle \rangle} = X$ and $V_a = \bigcup_{n \in \omega} V_{a\hspace{0.5pt}^{\frown}n}$ for all $a \in \btree$;
 \item [\ding{46}\,]  ${\bf V}$  {\it partitions} ${X}\ {\colon}{\longleftrightarrow}\  {\bf V}$ covers ${X}$ and  $V_{a\hspace{0.5pt}^{\frown}{n}} \cap V_{a\hspace{0.5pt}^{\frown}{m}} = \emptyset$ for all ${a} \in \btree$ and ${n} \neq {m} \in \omega$;
 \item [\ding{46}\,] $\mathsf{flesh}({\bf {V}})\coloneq\bigcup_{{a} \in \btree} {V}_{a}$;
 \item [\ding{46}\,] $\fruit{p}{{\bf V}}\coloneq\bigcap_{n \in \omega}V_{p \upharpoonright n}$;
 \item [\ding{46}\,]  ${\bf V}$ is {\it complete} $\ {\colon}{\longleftrightarrow}\ $  $\fruit{q}{{\bf V}} \neq \emptyset$ for all $q \in \bset$;
\item [\ding{46}\,]  ${\bf V}$ is {\it regular} $\ {\colon}{\longleftrightarrow}\  V_{a\hspace{0.5pt}^{\frown}n} \subseteq V_a$ for all $a \in \btree$ and $n \in \omega$;
 \item [\ding{46}\,]  ${\bf V}$ has {\it strict branches} $\ {\colon}{\longleftrightarrow}\ |\fruit{q}{{\bf V}}|=1$ for all $q \in \bset$;
 \item[\ding{46}\,] ${\bf {V}}$ is {\it open} on $\ll{X},\tau\rr\ {\colon}{\longleftrightarrow}\ {V}_{a}\in\tau$  for all ${a} \in \btree$;
  \item[\ding{46}\,] ${\bf {V}}$ is {\it semi-open} on $\ll{X},\tau\rr\ {\colon}{\longleftrightarrow}\ {V}_{a} \subseteq \cl{\int{{V}_{a}}{\tau}}{\tau}$  for all ${a} \in \btree$.
 \end{itemize}
\end{deff}

\begin{deff}[Definition 20 in~\cite{PATRAKEEV2023108410}]\mbox{}
 \begin{itemize}
   \item [\ding{46}\,] A {\it $\pi$-net} Souslin scheme on a space ${X}$ is a Souslin scheme ${\bf {V}} = \langle {V}_{a} \rangle_{{a} \in \btree}$ such that $\mathsf{flesh}({\bf {V}})\subseteq{X}$ and the family $\{{V}_{b}: {a}\sqsubseteq{b}\}$ is a $\pi$-net for the subspace ${V}_{a}$ of ${X}$ for all ${a} \in \btree$.
   \item [\ding{46}\,] A {\it $\pi$-base} Souslin scheme on a space ${X}$ is an open $\pi$-net Souslin scheme on ${X}$.
 \end{itemize}
\end{deff}

\begin{deff}[Definition~3.4 in~\cite{patrakeev2015metrizable}]\label{def.luz.p.base}
 A \textit{Lusin $\hspace{1pt}\mathsurround=0pt\pi$-base} for a space $\ll{X},\tau\rr$ is an open Souslin scheme $\langle V_a \rangle_{a \in \btree}$ on $X$ that partitions ${X}$, has strict branches, and such that
  \begin{itemize}
 \item[(L6)\,]
 $\mathsurround=0pt
   \forall{x}\in{X}\enskip\forall{U}\in\tau({x})$

   $\mathsurround=0pt
   \exists {a}\in\btree\enskip
   \exists{n}\in\omega\ $
   \begin{itemize}
   \item[\ding{226}\,]
  $\mathsurround=0pt
  {x} \in {V}_{a}\enskip$ and
   \item[\ding{226}\,]
  $\mathsurround=0pt
  \bigcup_{{i}\geqslant{n}}{V}_{{a}\hspace{0.5pt}^{\frown}{i}}\subseteq {U}.$
   \end{itemize}
  \end{itemize}
\end{deff}

\begin{nota}\label{not.S.N}\mbox{}
  \begin{itemize}
  \item [\ding{46}\,]
  $\mathsurround=0pt
  \mathbf{S}$ $\coloneq$ the \textit{standard Lusin scheme} $\coloneq$ the Souslin scheme $\langle {S}_{a}\rangle_{{a}\in\btree}$ such that

  $\mathsurround=0pt
  {S}_{a}=\{{p}\in\bset:{a}\sqsubseteq{p}\}$
  for all ${a}\in\btree.$
  \end{itemize}
\end{nota}

\begin{rema}\label{rem.baire.space}
  \begin{itemize}
  \item [(a)]
 The family $\{{S}_{a}:{a}\in\btree\}$ is a base for the Baire space.
  \item [(b)]
 The standard Lusin scheme is a Lusin $\pi\mathsurround=0pt$-base for the Baire space.
  \end{itemize}\hfill$\qed$%
\end{rema}

\begin{deff}[Definition 6 in~\cite{PATRAKEEV2023108410}]\label{def_of_pi_sp}
 A space ${X}$ is a $\pi$-\textit{space} iff there exists an open Souslin scheme $\langle V_a \rangle_{a \in \btree}$ on $X$ that partitions ${X}$, has strict branches, and such that the family $\{{V}_{a} : {a} \in \btree\}$ is a $\pi$-base for ${X}$.
\end{deff}

\begin{rema}\label{pi.base}
  If  $\langle V_a \rangle_{a \in \btree}$ is a Lusin $\pi\mathsurround=0pt$-base for a space ${X},$ then the family $\{{V}_{a}:{a}\in\btree\}$ is a $\pi\mathsurround=0pt$-base for ${X}$.

  It follows that every space with a Lusin $\pi$-base is a $\pi$-space.\hfill$\qed$%
\end{rema}

\begin{deff}
 A space $\langle \bset, \tau \rangle$ is a {\it standard $\pi$-space} if $\taun \setminus \{\varnothing\}$ is a $\pi$-base for $\langle \bset, \tau \rangle$.
\end{deff}

The Baire space is a standard $\pi$-space. It follows form Proposition~9(a,c) in~\cite{PATRAKEEV2023108410} that
\begin{rema}\label{rem.st.pi.space}\mbox{}
  \begin{itemize}
  \item [\ding{226}\,]
  Every standard $\pi$-space is a $\pi$-space.
  \item [\ding{226}\,]
  Every  $\pi$-space is homeomorphic to some standard $\pi$-space.\hfill$\qed$%
  \end{itemize}
\end{rema}

\section{Description of open images of spaces with a Lusin $\pi$-base}
\begin{nota}
  Let ${\mathbf{V}} = \ll {V}_{a} \rr_{{a}\in\btree}$ be a Souslin scheme, ${b}\in\btree,$ and ${k}\in\omega.$  Then

  \begin{itemize}
  \item [\ding{46}\,]
 $\mathsurround=0pt
 \widetilde{{V}}^{{k}}_{b}\:\coloneq\:
 \bigcup_{{j}\geqslant{k}}{V}_{{b}\hspace{0.5pt}^{\frown}{j}}$;
  \item [\ding{46}\,]
 $\mathsurround=0pt
 \shoot{{b}}{{\bf {V}}}\:\coloneq\:
 \big\{\widetilde{{V}}^{{k}}_{b}:{k}\in\omega\big\}$;
  \item [\ding{46}\,]
 $\mathsurround=0pt
 \gamma\inn{U}
 \ {\colon}{\longleftrightarrow}\
 \exists{G}\in\gamma\:[{G}\subseteq{U}]$.
  \end{itemize}
\end{nota}

\begin{nota} Let $\ll {X}, \tau \rr$ be a space, ${\mathbf{V}}$ a Souslin scheme, ${p}\in\bset$, ${U} \subseteq {X}$, and ${x}\in{X}$. Then
\begin{itemize}
 \item [\ding{46}\,] $p \xrightarrow{\bf {V}}{U} \quad{\colon}{\longleftrightarrow}\quad\exists {L} \in [\omega]^\omega\enskip \forall {n}\in {L}\ [ \shoot{{p} \upharpoonright {n}}{{\bf {V}}}\inn{U}]$;
 \item [\ding{46}\,] ${p}\xrightarrow{\mathbf{V}\!,\tau} {x}\quad{\colon}{\longleftrightarrow}\quad\forall {U}\in\tau({x})\ [p \xrightarrow{\bf {V}} U]$.
\end{itemize}
\end{nota}

\begin{nota}
  Let ${\mathbf{V}} = \ll {V}_{a} \rr_{{a}\in\btree}$ be a Souslin scheme and ${f}$ is a function. Then
  \begin{itemize}
  \item [\ding{46}\,]
 $\mathsurround=0pt {f}\l\mathbf{V}\r$ is the Souslin scheme $\ll{U}_{a}\rr_{{a}\in\btree}$ such that
 ${U}_{a}={f}[{V}_{a}]$ for all ${a}\in\btree$.
  \end{itemize}
\end{nota}

\begin{rema} \label{vrost_pres_map}
 Suppose that $\gamma$ a family of subsets of ${X}$, ${U} \subseteq {X}$, ${f}$ is a function whose domain equals ${X}$, ${\mathbf{V}}$ is a Souslin scheme such that $\mathsf{flesh}({\bf {V}})\subseteq{X}$, ${b}\in\btree$, and ${p}\in\bset$. Then
 \begin{itemize}
  \item [\ding{46}\,]
 if $\gamma\inn{U}$, then $\{{f}[{A}] : {A} \in \gamma\} \inn{{f}[{U}]}$;
  \item [\ding{46}\,]
 if $\shoot{{b}}{{\bf {V}}} \inn{U}$, then $\shoot{{b}}{{f}\l{\bf {V}}\r} \inn{{f}[{U}]}$;
  \item [\ding{46}\,]
 if $p \xrightarrow{\bf {V}} {U}$, then $p \xrightarrow{{f}\l{\bf {V}}\r} {f}[{U}]$.\hfill$\qed$%
  \end{itemize}
\end{rema}

\begin{rema}\label{appl_pres_con_fun}
 Suppose that ${f}$ is a continuous function from a space $\ll {X}, \tau \rr$ to a space $\ll {Y}, \sigma \rr$, ${\bf {V}}$ is a Souslin scheme such that $\mathsf{flesh}({\bf {V}})\subseteq{X}$, ${p} \in \bset$, and ${x} \in {X}$. Then
  \begin{itemize}
  \item [\ding{46}\,]
  if ${p}\xrightarrow{\mathbf{V}\!,\tau} {x}$,
  then ${p}\xrightarrow{{f}\l\mathbf{V}\r,\sigma} {f}({x})$.
  \end{itemize}
\end{rema}

\begin{proof}
 Suppose that ${p} \in \bset$, ${x} \in \fruit{p}{\bf {V}}$, and ${p}\xrightarrow{\mathbf{V}\!,\tau} {x}$. Take $U \in \sigma(f(x))$. Since $f$ is continuous, we see that
 \begin{equation*}
  p \xrightarrow{\bf {V}} {f}^{-1}[U].
 \end{equation*}
 Then it follows from  Remark \ref{vrost_pres_map} that
 \begin{equation*}
  p \xrightarrow{{f}\l{\bf {V}}\r} U.
 \end{equation*}
\end{proof}

\begin{rema}\label{rem.L6'}
  The clause (L6) of the definition of a Lusin $\pi\mathsurround=0pt$-base is equivalent to the following:
  \begin{itemize}
  \item[(L6')]
 $\mathsurround=0pt
 \forall{x}\in{X}\enskip\forall{U}\in\tau({x})$

 $\mathsurround=0pt
 \exists {a}\in\btree\ $
 such that
 \begin{itemize}
 \item[\ding{226}\,]
 $\mathsurround=0pt
 {x} \in {V}_{a}\enskip$ and
 \item[\ding{226}\,]
 $\mathsurround=0pt
 \shoot{{a}}{{\bf {V}}}\inn {U}.$\hfill$\qed$%
 \end{itemize}
  \end{itemize}
\end{rema}

\begin{lemm}\label{lpb_appl}
 If ${\bf {V}} = \ll {V}_{a} \rr_{{a} \in \btree}$ is a Lusin $\pi$-base for a space $\ll {X}, \tau \rr$ and ${p}\in\bset$,     then ${p}\xrightarrow{\mathbf{V}\!,\tau} {x}$ for all $x \in \fruit{p}{\bf {V}}$.
\end{lemm}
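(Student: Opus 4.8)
The plan is to fix a point $x \in \fruit{p}{\bf V}$ and, for each $U \in \tau(x)$, to exhibit an infinite set of levels $n$ with $\shoot{p\uph n}{\bf V}\inn U$; this is exactly the statement $p\xrightarrow{\bf V}U$, and quantifying over $U \in \tau(x)$ gives $p\xrightarrow{\mathbf{V}\!,\tau}x$. The only tool at hand is the reformulation (L6') from Remark \ref{rem.L6'}, but it delivers a \emph{single} node $a$ with $x \in V_a$ and $\shoot{a}{\bf V}\inn U$. So two things must be arranged: first, to see that this good node actually sits on the branch $p$; and second, to manufacture cofinally many good levels out of one.

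Before either step I would record the standing consequences of the hypotheses. Since a Lusin $\pi$-base partitions $X$, it in particular covers $X$, whence $V_{b^{\frown}j} \subseteq V_b$; combined with the disjointness of siblings, an easy induction shows that $\{V_b : \lh(b)=m\}$ is pairwise disjoint for each fixed level $m$. Strict branches force every $V_b$ to be nonempty: any $b$ extends to some branch $q \in \bset$, and the singleton $\fruit{q}{\bf V}$ is contained in $V_b$. Now, if (L6') returns $a$ with $x \in V_a$ and $\lh(a)=m$, then since $x \in \fruit{p}{\bf V} \subseteq V_{p\uph m}$ and the two level-$m$ sets $V_a$, $V_{p\uph m}$ both contain $x$, disjointness gives $a = p\uph m$. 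Thus every node produced by (L6') automatically lies on $p$.

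The step I expect to be the main obstacle is the second one, because a naive level-by-level monotonicity is false: from $\shoot{p\uph m}{\bf V}\inn U$, i.e. $\widetilde{V}^{k}_{p\uph m} \subseteq U$ for some $k$, one cannot conclude $\shoot{p\uph(m+1)}{\bf V}\inn U$, since the threshold $k$ may overshoot $p(m)$ and discard precisely the child through which $p$ runs. My fix is to force the \emph{depth} of the node returned by (L6'). Given any $N$, I would apply (L6') not to $U$ but to the smaller neighbourhood $W \coloneq U \cap V_{p\uph N}$, which lies in $\tau(x)$ because the scheme is open and $x \in V_{p\uph N}$. This returns $a = p\uph m$ together with a $k$ satisfying $\widetilde{V}^{k}_{p\uph m} \subseteq W \subseteq V_{p\uph N}$.

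It then remains to check $m \geq N$. If instead $m < N$, choose any $j \geq k$ with $j \neq p(m)$: then $V_{(p\uph m)^{\frown}j} \subseteq \widetilde{V}^{k}_{p\uph m} \subseteq V_{p\uph N} \subseteq V_{p\uph(m+1)} = V_{(p\uph m)^{\frown}p(m)}$, so the nonempty set $V_{(p\uph m)^{\frown}j}$ would lie inside its disjoint sibling $V_{(p\uph m)^{\frown}p(m)}$ --- absurd. Hence $m \geq N$, and since $W \subseteq U$ we get $\shoot{p\uph m}{\bf V}\inn U$ with $m \geq N$. As $N$ was arbitrary these good levels are unbounded, so $p\xrightarrow{\bf V}U$; and as $U$ ranged over $\tau(x)$, this is precisely $p\xrightarrow{\mathbf{V}\!,\tau}x$.
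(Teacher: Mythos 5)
Your proof is correct and follows essentially the same route as the paper's: for each depth $N$ you apply (L6') to the shrunken neighbourhood $U \cap V_{p\uph N}$, identify the returned node as $p\uph m$ via the partition property, and argue $m \geq N$, exactly as the paper does with its nodes $a_i = p\uph n_i$ and the claim $n_i \geq i$. The only difference is that you spell out the details (disjointness of level sets, nonemptiness of the $V_b$'s from strict branches) that the paper leaves implicit in the phrase ``Since ${\bf V}$ is a Lusin $\pi$-base, it follows that\dots''.
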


\begin{proof}
 Let ${p} \in \bset$, ${x} \in \fruit{p}{\bf {V}}$, ${U} \in \tau({x})$, and ${i}\in\omega$.
 We have ${U} \cap {V}_{{p} \upharpoonright {i}}\in \tau({x})$, so it follows from (L6')  that
 there exists ${a}_{i}\in\btree$ such that
 \begin{equation*} \label{L_prop}
   {x}\in{V}_{{a}_{i}}\ \text{ and }\ \shoot{{a}_{i}}{{\bf {V}}}\inn {U} \cap {V}_{{p} \upharpoonright {i}}.
 \end{equation*}
 Since ${\bf {V}}$ is a Lusin $\pi$-base, it follows that ${a}_{i}={p} \upharpoonright {n}_{i}$ for some ${n}_{i}\in\omega$, and then we have ${n}_{i}\geqslant{i}$ and
 $\shoot{{p} \upharpoonright {n}_{i}}{{\bf {V}}}\inn {U}$.
 The set $L \coloneq \{{n}_{i} : {i} \in \omega\}$ is infinite and we have
 \begin{equation*}
  \forall n \in L\ [\shoot{{p} \upharpoonright {n}}{{\bf {V}}}\inn {U}].
 \end{equation*}
\end{proof}

\begin{deff}
 An \textit{$\alpha$-scheme} for a space $\ll {X}, \tau \rr$ is an open complete Souslin scheme ${\mathbf{V}} = \ll {V}_{a} \rr_{{a}\in\btree}$ on $\ll X, \tau \rr$ that covers ${X}$ and such that:
 \begin{itemize}
  \item[(S1)] For all ${a}\in\btree$ and all ${x}\in{V}_{a}$, there exists ${p}\in{S}_{a}$ such that

  ${x}\in\fruit{{p}}{\mathbf{V}}\ \text{ and }\ {p}\xrightarrow{\mathbf{V}\!,\tau} {x}$;
  \item[(S2)] $\forall {p}\in\bset\enskip \exists {x}\in\fruit{{p}}{\mathbf{V}}\enskip[{p}\xrightarrow{\mathbf{V}\!,\tau} {x}]$.
 \end{itemize}
\end{deff}

It follows from Lemma~\ref{lpb_appl} that
\begin{rema}
  Every Lusin $\pi$-base for a space ${X}$ is an $\alpha$-scheme for a space ${X}$.\hfill$\qed$%
\end{rema}
Note also that if $\langle V_a \rangle_{a \in \btree}$ is an $\alpha$-scheme for a space $X$, then the family $\{{V}_{a} : {a} \in \btree\}$ is a $\pi$-base for ${X}$.

\begin{prop} \label{LP_pi_base}
 If \,${\bf {W}} = \langle {W}_{a} \rangle_{{a} \in \btree}$ \!is an $\alpha$-scheme for a space $\ll X, \tau\rr$, then ${\bf {W}}$ is a $\pi$-base Souslin scheme on $\ll X, \tau\rr$.
\end{prop}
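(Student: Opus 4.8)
The plan is to verify $\mathbf{W}$ directly against the definition of a $\pi$-base Souslin scheme, i.e. that it is an open $\pi$-net Souslin scheme on $\ll X,\tau\rr$. Openness is free, since it is part of the definition of an $\alpha$-scheme that $W_a\in\tau$ for all $a\in\btree$; because $\tau$ is a topology on $X$, each $W_a\subseteq X$, so $\mathsf{flesh}(\mathbf{W})=\bigcup_{a}W_a\subseteq X$ and $\mathbf{W}$ is a Souslin scheme on $\ll X,\tau\rr$. Thus everything reduces to checking, for each fixed $a\in\btree$, that $\gamma_a\coloneq\{W_b:a\sqsubseteq b\}$ is a $\pi$-net for the subspace $W_a$: all its members must be nonempty, and every nonempty relatively open $U\subseteq W_a$ must contain some member of $\gamma_a$.

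For nonemptiness I would use completeness. Given any $b\in\btree$, choose $q\in\bset$ with $b\sqsubseteq q$; then completeness gives $\fruit{q}{\mathbf{W}}\neq\varnothing$, and since $\fruit{q}{\mathbf{W}}=\bigcap_{n}W_{q\uph n}\subseteq W_{q\uph\lh(b)}=W_b$ (taking $n=\lh(b)$ in the intersection), we get $W_b\neq\varnothing$. In particular every member of $\gamma_a$ is nonempty.

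The main work is the covering property of the $\pi$-net, and this is where condition (S1) and the convergence arrows enter; I expect this to be the crux. Since $W_a$ is open in $X$, a nonempty relatively open $U\subseteq W_a$ is simply a nonempty $U\in\tau$ with $U\subseteq W_a$. Pick any $x\in U$. By (S1) there is $p\in S_a$ (so $a\sqsubseteq p$) with $x\in\fruit{p}{\mathbf{W}}$ and $p\xrightarrow{\mathbf{W}\!,\tau}x$. Because $x\in U\in\tau$ we have $U\in\tau(x)$, so the definition of the arrow yields $p\xrightarrow{\mathbf{W}}U$, i.e. there is $L\in[\omega]^\omega$ with $\shoot{p\uph n}{\mathbf{W}}\inn U$ for every $n\in L$.

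The last step is to convert this $\shoot$-statement into a single basic set $W_b\subseteq U$. Choose $n\in L$ with $n\geq\lh(a)$ (possible since $L$ is infinite), so that $a=(p\uph n)\uph\lh(a)$, i.e. $a\sqsubseteq p\uph n$. From $\shoot{p\uph n}{\mathbf{W}}\inn U$ we obtain $k\in\omega$ with $\widetilde{W}^{k}_{p\uph n}=\bigcup_{j\geq k}W_{(p\uph n)^{\frown}j}\subseteq U$. Setting $b\coloneq(p\uph n)^{\frown}k$, we have $a\sqsubseteq b$, hence $W_b\in\gamma_a$, and $W_b\subseteq\widetilde{W}^{k}_{p\uph n}\subseteq U$, as required; by the previous paragraph $W_b$ is also nonempty. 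This establishes the $\pi$-net property for every $a$ and completes the verification. The only genuinely delicate point is unwinding the two nested existential quantifiers hidden in $p\xrightarrow{\mathbf{W}}U$ and in $\shoot{\cdot}{\mathbf{W}}\inn U$, and arranging the chosen index $n$ so that the resulting $b$ extends $a$; everything else is bookkeeping. Note that (S2) and the covering hypothesis are not needed here — only openness, completeness, and (S1) are used.
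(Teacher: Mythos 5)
Your proposal is correct and takes essentially the same approach as the paper: apply (S1) to a point $x\in U$, use ${p}\xrightarrow{\mathbf{W}\!,\tau} {x}$ to obtain infinitely many ${n}$ with $\shoot{{p}\uph{n}}{\mathbf{W}}\inn U$, pick such an ${n}\geq\lh({a})$, and extract ${b}\sqsupseteq{a}$ with ${W}_{b}\subseteq U$. The only difference is that you also spell out the nonemptiness of the sets ${W}_{b}$ via completeness (and the openness/flesh conditions), which the paper's proof leaves implicit; that extra care is harmless and in fact needed by the definition of a $\pi$-net.
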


\begin{proof}
    Take ${a} \in \btree$ and nonempty ${U} \in \tau$ such that ${U}\subseteq{W}_{a}$. Take $x \in {U}$. By (S1) we can find ${p} \in {S}_{a}$ such that ${x}\in\fruit{{p}}{\bf {W}}$ and ${p}\xrightarrow{\mathbf{W}\!,\tau} {x}$. Since ${p}\xrightarrow{\mathbf{W}\!,\tau} {x}$ and ${U} $ is an open neighbourhood of ${x}$, we see that there exists $n > \lh({a})$ such that $\shoot{{p}\uph{n}}{\mathbf{W}} \inn{{U} }$. So there exists $b \in \btree$ such that $a \sqsubset b$ and ${W}_{b} \subseteq {U}$.
\end{proof}

\begin{teo}\label{desc_open_LPB}
 A space ${X}$ is a continuous open image of a space with a Lusin $\pi$-base if and only if ${X}$ has an $\alpha$-scheme and $\:|X| \leq \mathfrak{c}$.
\end{teo}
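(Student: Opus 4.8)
The plan is to prove the two directions separately, since the theorem is a biconditional characterizing continuous open images of spaces with a Lusin $\pi$-base.

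For the forward direction (necessity), suppose $f\colon Z \to X$ is a continuous open surjection where $Z$ carries a Lusin $\pi$-base ${\bf V} = \langle V_a \rangle_{a \in \btree}$. The cardinality bound $|X| \leq \mathfrak{c}$ should follow quickly: a space with a Lusin $\pi$-base has strict branches, so the map $p \mapsto$ (the unique point of $\fruit{p}{\bf V}$) gives a surjection from $\bset$ (of cardinality $\mathfrak{c}$) onto $Z$ via the flesh, whence $|Z| \leq \mathfrak{c}$, and $|X| \leq |Z|$ since $f$ is surjective. The heart is to show that ${\bf W} \coloneq f\l{\bf V}\r = \langle f[V_a] \rangle_{a \in \btree}$ is an $\alpha$-scheme for $X$. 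It is open because $f$ is an open map, and it covers $X$ and is complete because ${\bf V}$ covers $Z$ (as the Lusin $\pi$-base partitions $Z$) and $f$ is a surjection pushing the covering/partition structure forward. For (S1) and (S2), I would invoke the transport machinery already assembled in the excerpt: by Lemma~\ref{lpb_appl}, for every $p \in \bset$ and every $z \in \fruit{p}{\bf V}$ we have $p \xrightarrow{\mathbf{V}\!,\tau} z$; then Remark~\ref{appl_pres_con_fun} converts this into $p \xrightarrow{f\l\mathbf{V}\r,\sigma} f(z)$, and since $f[\fruit{p}{\bf V}] \subseteq \fruit{p}{f\l{\bf V}\r}$, the point $f(z)$ witnesses the required arrow condition. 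Verifying (S1) means, given $a$ and $x \in f[V_a]$, choosing a preimage $z \in V_a$ and a branch $p \in S_a$ through $z$ supplied by the Lusin structure; (S2) is the surjectivity statement that every branch has a convergent image point.

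For the reverse direction (sufficiency), suppose $X$ has an $\alpha$-scheme ${\bf W} = \langle W_a \rangle_{a \in \btree}$ and $|X| \leq \mathfrak{c}$. The goal is to build a space $Z$ with a Lusin $\pi$-base together with a continuous open surjection $f\colon Z \to X$. The natural candidate for $Z$ is a subspace of the Baire space $\bset$ (or of a product of $\bset$ with a discrete fiber accounting for the cardinality): I would topologize a suitable set of branches/pairs so that the standard Lusin scheme $\mathbf{S}$ restricts to a genuine Lusin $\pi$-base on $Z$, while arranging the map $f$ to send a branch $p$ to the point of $X$ supplied by (S2), namely the $x \in \fruit{p}{\bf W}$ with $p \xrightarrow{\mathbf{W}\!,\tau} x$. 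Continuity of $f$ should follow from the fact that $p \xrightarrow{\mathbf{W}\!,\tau} x$ encodes precisely that tails of the branch map into neighborhoods of $x$, and openness should follow from Proposition~\ref{LP_pi_base}, which guarantees $\{W_a\}$ is a $\pi$-base, so that images of basic open sets of $Z$ contain basic open sets of $X$.

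The main obstacle will be the reverse direction, specifically the coordination of three requirements at once: (i) that $f$ is genuinely well-defined and single-valued, which may force a careful selection using (S2) and the strict-branch requirement of a Lusin $\pi$-base — note an $\alpha$-scheme need not have strict branches, so $Z$ cannot simply be $X$ with $\bf W$ but must be a branch space where distinct branches are separated; (ii) that $f$ is open, where the subtlety is that openness of a map into $X$ requires the $\pi$-base property to propagate \emph{downward} through the scheme, using the ``$\shoot{a}{\bf W} \inn U$'' arrows of (S1) to find, inside the image of any basic neighborhood in $Z$, a whole $W_b$; and (iii) that the cardinality bound $|X| \leq \mathfrak{c}$ is exactly what lets the branch space $Z$ (a subset of $\bset$ or $\bset \times \mathfrak{c}$) surject onto all of $X$ without running out of branches. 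I expect the construction of $Z$ and the verification that $\mathbf{S}\uph Z$ satisfies clause (L6) to be the most delicate bookkeeping, since (L6) is the defining feature separating a Lusin $\pi$-base from a mere $\pi$-base.
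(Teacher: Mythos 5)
Your necessity direction is correct and coincides with the paper's (it is Lemma~\ref{has_lp_sch}): push the Lusin $\pi$-base forward along the open map and transport the arrow relations via Lemma~\ref{lpb_appl} and Remark~\ref{appl_pres_con_fun}; the cardinality bound is as you say. The genuine gap is in the sufficiency direction, exactly at the obstacles you label (i) and (iii) and then leave unresolved. For your map $f$ to be open, and for clause (L6) to be checkable, $f$ must be a \emph{selector} on the scheme ${\bf W}$: every fiber ${f}^{-1}({x})$ must be \emph{dense} in $\mathsf{branches}_{\bf W}({x})$, because density is what yields ${f}[{S}_{a}]={W}_{a}$ (Lemma~\ref{main_property_of_fiber_scheme}), which in turn yields openness (Lemma~\ref{fXV_is_open}) and (L6) for the standard scheme (Lemma~\ref{has_lpb}). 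So for each of up to continuum many points ${x}$ you must choose a countable dense set ${Q}_{x}\subseteq\mathsf{branches}_{\bf W}({x})$ consisting of branches ${p}$ with ${p}\xrightarrow{\mathbf{W}\!,\tau}{x}$, and the sets ${Q}_{x}$ must be pairwise disjoint so that $f$ is single-valued. But (S1) guarantees only \emph{one} good branch in each basic set ${S}_{a}$ meeting $\mathsf{branches}_{\bf W}({x})$; in a transfinite recursion of length $\mathfrak{c}$ the earlier stages may have already consumed every good branch available to a later point, and the construction halts. The paper's key idea, absent from your proposal, is to first upgrade the $\alpha$-scheme to a \emph{ramose} one (Lemma~\ref{LP_to_strongLP}): re-index ${\bf W}$ along a two-to-one surjection ${g}\colon\omega\to\omega$, so that each good branch ${p}$ splits into continuum many branches ${q}$ with ${g}\circ{q}={p}$, all good for ${\bf W}^{g}$; this gives (RS1), after which the disjoint-dense-fibers recursion (Lemma~\ref{strongLP_has_selector}) succeeds.

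A second, smaller defect: your continuity plan does not work. If ${Z}$ is a \emph{subspace} of the Baire space, continuity of $f$ does not follow from ${p}\xrightarrow{\mathbf{W}\!,\tau}{x}$: that relation asserts that certain tail unions $\bigcup_{{j}\geq{k}}{W}_{({p}\uph{n})^{\frown}{j}}$ of the scheme \emph{inside} ${X}$ lie in neighbourhoods of ${x}$; it says nothing about $f$-images of basic neighbourhoods of ${p}$, and with the subspace topology $f$ is in general discontinuous. The paper instead equips the \emph{whole} set $\bset$ with the finer topology $\sigma_{\tau,{f}}$ generated by $\{{f}^{-1}[{U}]:{U}\in\tau\}\cup\{{S}_{a}:{a}\in\btree\}$, so continuity holds by fiat; the real content is then that this enlargement keeps $f$ open (via the selector property, Lemma~\ref{fXV_is_open}) and keeps ${\bf S}$ a Lusin $\pi$-base for $\ll\bset,\sigma_{\tau,{f}}\rr$ (via the arrow property ${p}\xrightarrow{\mathbf{W}\!,\tau}{f}({p})$, Lemma~\ref{has_lpb}).
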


\begin{deff}
 Let $\ll {X}, \tau \rr$ be a space. A \textit{ramose $\alpha$-scheme} for a space $\ll {X}, \tau \rr$ is an $\alpha$-scheme ${\mathbf{V}} = \ll {V}_{a} \rr_{{a}\in\btree}$ for $\ll {X}, \tau \rr$  such that:
 \begin{itemize}
  \item[(RS1)] For all ${a}\in\btree$ and all ${x}\in{V}_{a}$, there exist continuum many ${p}\in{S}_{a}$ such that

  ${x}\in\fruit{{p}}{\mathbf{V}}\ \text{ and }\ {p}\xrightarrow{\mathbf{V}\!,\tau} {x}$.
 \end{itemize}
\end{deff}

\begin{nota}
 Suppose that ${\bf {V}} = \langle {V}_{a} \rangle_{{a} \in \btree}$ is a Souslin scheme and ${g} \colon \omega \to \omega$. Then ${\bf {V}}^{{g}} = \langle {V}^{{g}}_{a} \rangle_{{a} \in \btree}$ is a Souslin scheme such that ${V}^{{g}}_{a} \coloneq {V}_{{g} \circ {a}}$ for all ${a} \in \btree$.
\end{nota}

\begin{lemm} \label{LP_to_strongLP}
    If a space has an $\alpha$-scheme, then it has a ramose $\alpha$-scheme.
\end{lemm}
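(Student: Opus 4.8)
The plan is to obtain the ramose $\alpha$-scheme from the given $\alpha$-scheme $\mathbf{V}=\langle V_a\rangle_{a\in\btree}$ by a single reindexing $\mathbf{W}\coloneq\mathbf{V}^{g}$, where $g\colon\omega\to\omega$ is chosen to satisfy three properties: (i) $g$ is surjective; (ii) every fibre $g^{-1}(m)$ has at least two elements; and (iii) $g(j)\to\infty$ as $j\to\infty$ (equivalently, every fibre is finite). These are jointly realisable, e.g.\ by $g(j)=\lfloor j/2\rfloor$. Surjectivity will be what makes $\mathbf{W}$ cover $X$; the fibres of size at least $2$ will produce continuum-many branches for (RS1); and $g(j)\to\infty$ will be the technical ingredient that transports the convergence relation $\xrightarrow{\mathbf{V},\tau}$ onto $\mathbf{W}$.

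First I would record the identities that make the reindexing transparent. Since $g\circ(p\uph n)=(g\circ p)\uph n$ for every $p\in\bset$, we get $\fruit{p}{\mathbf{W}}=\fruit{g\circ p}{\mathbf{V}}$; and using surjectivity together with the covering of $\mathbf{V}$ one checks $\bigcup_{n}W_{a^\frown n}=W_a$, so $\mathbf{W}$ covers $X$. Openness, completeness, and $\mathsf{flesh}(\mathbf{W})\subseteq X$ are then immediate from the corresponding properties of $\mathbf{V}$ (using the $\mathsf{fruit}$ identity for completeness). Thus $\mathbf{W}$ is an open complete Souslin scheme on $\langle X,\tau\rangle$ covering $X$, and it remains to verify (S2) and the strengthened (RS1).

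The heart of the argument is the transport lemma: if $g\circ p\xrightarrow{\mathbf{V},\tau}x$, then $p\xrightarrow{\mathbf{W},\tau}x$. I would prove it at the level of shoots, showing that for every $b\in\btree$ and every open $U$, $\shoot{g\circ b}{\mathbf{V}}\to U$ implies $\shoot{b}{\mathbf{W}}\to U$. Writing $c=g\circ b$, if $\widetilde{V}^{k'}_{c}=\bigcup_{m\geq k'}V_{c^\frown m}\subseteq U$, then by (iii) there is $k$ with $g(j)\geq k'$ for all $j\geq k$, whence $\widetilde{W}^{k}_{b}=\bigcup_{j\geq k}V_{c^\frown g(j)}\subseteq\bigcup_{m\geq k'}V_{c^\frown m}\subseteq U$. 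Applying this with $b=p\uph n$ and using $g\circ(p\uph n)=(g\circ p)\uph n$, the same infinite set $L$ witnessing $g\circ p\xrightarrow{\mathbf{V}}U$ witnesses $p\xrightarrow{\mathbf{W}}U$; quantifying over $U\in\tau(x)$ gives the claim. Matching the tail-indexed shoots of $\mathbf{W}$ to those of $\mathbf{V}$ is exactly where (iii) is essential, and I expect this to be the main obstacle: note that a surjection with infinite fibres would make $\{g(j):j\geq k\}=\omega$ for all $k$, collapsing every $\shoot{b}{\mathbf{W}}$ to the single set $W_b$ and destroying convergence, so the choice of $g$ is delicate.

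Finally I would assemble the scheme conditions. For (S2), given $p\in\bset$ put $q=g\circ p$; (S2) for $\mathbf{V}$ yields $x\in\fruit{q}{\mathbf{V}}=\fruit{p}{\mathbf{W}}$ with $q\xrightarrow{\mathbf{V},\tau}x$, and the transport lemma gives $p\xrightarrow{\mathbf{W},\tau}x$. For (RS1), take $a\in\btree$ and $x\in W_a=V_{g\circ a}$; (S1) for $\mathbf{V}$ applied at the index $g\circ a$ produces $q\in S_{g\circ a}$ with $x\in\fruit{q}{\mathbf{V}}$ and $q\xrightarrow{\mathbf{V},\tau}x$. The set $P=\{p\in\bset:a\sqsubseteq p\text{ and }g\circ p=q\}$ is obtained by choosing $p(i)\in g^{-1}(q(i))$ for each $i\geq\lh(a)$, so by (ii) $|P|=\mathfrak{c}$, and every $p\in P$ lies in $S_a$, satisfies $\fruit{p}{\mathbf{W}}=\fruit{q}{\mathbf{V}}\ni x$, and, since $g\circ p=q\xrightarrow{\mathbf{V},\tau}x$, satisfies $p\xrightarrow{\mathbf{W},\tau}x$ by the transport lemma. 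Hence $\mathbf{W}$ satisfies (RS1); as (RS1) implies (S1), $\mathbf{W}$ is an $\alpha$-scheme, and therefore a ramose $\alpha$-scheme for $\langle X,\tau\rangle$.
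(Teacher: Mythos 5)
Your proposal is correct and takes essentially the same approach as the paper: the paper's proof also passes to $\mathbf{V}^{g}$ for a finite-to-one surjection $g$ with every fibre of size two (your $g(j)=\lfloor j/2\rfloor$ is exactly such a map), and its key inline observation that $\shoot{g\circ b}{\mathbf{V}}\inn U$ implies $\shoot{b}{\mathbf{V}^{g}}\inn U$ is precisely your transport lemma. The only differences are presentational: you verify the basic Souslin-scheme properties of $\mathbf{V}^{g}$ directly and spell out the cardinality count for the fibre set, where the paper cites Lemma~32 of \cite{PATRAKEEV2023108410} and asserts the count without detail.
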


\begin{proof}
  Let $\,{\mathbf{V}} = \ll {V}_{a} \rr_{{a}\in\btree}$ be an $\alpha$-scheme for a space $\ll {X}, \tau\rr$.
  Take a function $g\colon \omega \to \omega$ such that $|{g}^{-1}(n)| = 2$ for all ${n} \in \omega$.
  We shall show that ${\bf V}^{g}$ is a ramose $\alpha$-scheme for a space $\ll {X}, \tau\rr$.

  From \cite[Lemma~32]{PATRAKEEV2023108410} it follows that ${\bf V}^{g}$ is an open complete Souslin scheme on $\ll X, \tau \rr$ that covers ${X}$. 
  Condition (S2) for ${\bf V}^{g}$ follows from condition (S2) for ${\bf V}$.

  Let us check condition (RS1) for ${\bf V}^{g}$; note that (S1) follows from it. Take ${a} \in \btree$ and ${x} \in {V}^{g}_{a} = {V}_{{g}\circ{a}}$. From (S1) for ${\bf V}$ it follows that there exists a branch ${p} \in {S}_{{g} \circ {a}}$ such that ${x}\in\fruit{{p}}{{\bf V}}$ and ${p}\xrightarrow{\mathbf{V}\!,\tau} {x}$.
  Consider the set
  $$
  {F}\coloneq\{{q}\in{S}_{a}:{g}\circ{q}={p}\}.
  $$
  This set has cardinality of continuum. For all ${q}\in{F}$, we have 
  \[
  {x}\in\fruit{{p}}{\bf V}=
  \bigcap_{{n}\in\omega}{V}_{{p}\upharpoonright{n}}=
  \bigcap_{{n}\in\omega}{V}_{({g}\circ{q})\upharpoonright{n}}=
  \bigcap_{{n}\in\omega}{V}_{{g}\circ({q}\upharpoonright{n})}=
  \bigcap_{{n}\in\omega}{V}^{g}_{{q}\upharpoonright{n}}=
  \fruit{{q}}{{\bf {V}}^{\!{g}}}.
  \]

  It remains to show that ${q}\xrightarrow{\mathbf{V}^{g}\!,\tau} {x}$ for all ${q}\in{F}$.
  We have ${p}\xrightarrow{\mathbf{V}\!,\tau} {x}$, so ${p} \xrightarrow{\bf {V}} U$ for all ${U}\in\tau({x})$.
  This means
  \[
   \forall {U}\in\tau({x})\,\exists {L} \in [\omega]^\omega\,\forall{{n}\in {L}}\
   [\,\shoot{{p} \upharpoonright {n}}{{\bf {V}}}\inn {U}\,].
  \]
  Since  ${g}\circ{q}={p}$, we have
  \[
   \forall {U}\in\tau({x})\,\exists {L} \in [\omega]^\omega\,\forall{{n}\in {L}}\
   [\,\shoot{{g}\circ({q} \upharpoonright {n})}{{\bf {V}}}\inn {U}\,].
  \]
  Note that for every ${b} \in \btree$ and every set ${U}$, if $\shoot{{g}\circ{b}}{{\bf {V}}}\inn {U}$,  then
  $\shoot{b}{{\bf {V}}^{g}}\inn {U}$.
  It follows that
  \[
   \forall {U}\in\tau({x})\,\exists {L} \in [\omega]^\omega\,\forall{{n}\in {L}}\
   [\,\shoot{{q} \upharpoonright {n}}{{\bf {V}}^{g}}\inn {U}\,].
  \]
  This means ${q}\xrightarrow{\mathbf{V}^{g}\!,\tau} {x}$.
\end{proof}

\begin{nota}\label{not.S.N}\mbox{\ } Let ${\bf {V}} = \langle {V}_{a} \rangle_{{a} \in \btree}$ be a Souslin scheme. Then
  \begin{itemize}
 \item [\ding{46}\,]  $\mathsf{branches}_{\bf {V}} ({x}) \coloneq \{q \in \bset : x \in \fruit{q}{{\bf V}}\}$.
  \end{itemize}
\end{nota}

\begin{deff}[Definition~21 in~\cite{PATRAKEEV2023108410}]
 A \emph{selector} on a Souslin scheme ${\bf {V}}$ is a surjection  ${f}\colon\bset\to\mathsf{flesh}({\bf {V}})$ such that for all ${x} \in \mathsf{flesh}({\bf {V}})$, the preimage ${f}^{-1}(x)$ is a dense subset of the subspace $\mathsf{branches}_{\bf {V}}({x})$ of the Baire space.
\end{deff}

If a Souslin scheme ${\bf {V}}$ has  strict branches and covers a set ${X}$, then the function ${f} \colon \bset \to {X}$ such that $\{{f}({p})\} = \fruit{p}{{\bf {V}}}$ is a selector on ${\bf {V}}$.

A less trivial example of a selector can be obtained as follows. Let ${f}$ be a continuous surjection from the Baire space onto a space ${X}$. Let ${V}_{a} \coloneq {f}[{S}_{a}]$ for all ${a} \in \btree$. Then ${f}$ is a selector on $\ll {V}_{a} \rr_{{a} \in \btree}$.

\begin{lemm}[Lemma~22 in~\cite{PATRAKEEV2023108410}]\label{main_property_of_fiber_scheme}
 Let ${\bf {V}} = \langle {V}_{a} \rangle_{{a} \in \btree}$ be a Souslin scheme that covers $\mathsf{flesh}({\bf {V}})$ and let ${f}$ be a selector on ${\bf {V}}$. Then ${f}[{S}_{a}] = {V}_{a}$ for all ${a} \in \btree$.\hfill \qed
\end{lemm}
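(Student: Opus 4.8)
The plan is to prove the two inclusions $f[S_a]\subseteq V_a$ and $V_a\subseteq f[S_a]$ separately, with the first essentially immediate from the definition of a selector and the second resting on a branch-building argument that exploits the covering hypothesis.

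For $f[S_a]\subseteq V_a$ I would fix $p\in S_a$ and put $x\coloneq f(p)$. Since $f$ is a selector, $p\in f^{-1}(x)\subseteq\mathsf{branches}_{{\bf V}}(x)$, so $x\in\fruit{p}{{\bf V}}=\bigcap_{n}V_{p\uph n}$. As $p\in S_a$ means $a\sqsubseteq p$, we have $a=p\uph\lh(a)$, whence $x\in V_{p\uph\lh(a)}=V_a$. This step uses only that $f^{-1}(x)$ lands inside $\mathsf{branches}_{{\bf V}}(x)$ and does not require the covering assumption.

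For the reverse inclusion I would fix $x\in V_a$ and seek some $p\in f^{-1}(x)\cap S_a$. The idea is to combine density of $f^{-1}(x)$ in $\mathsf{branches}_{{\bf V}}(x)$ with the fact that $S_a$ is basic clopen in the Baire space, so $S_a\cap\mathsf{branches}_{{\bf V}}(x)$ is relatively open in $\mathsf{branches}_{{\bf V}}(x)$; once this set is known to be nonempty, density supplies a point of $f^{-1}(x)$ inside it, i.e. a $p\in S_a$ with $f(p)=x$. The crux is therefore to construct a single branch $q\in S_a$ with $x\in\fruit{q}{{\bf V}}$, witnessing $S_a\cap\mathsf{branches}_{{\bf V}}(x)\neq\varnothing$. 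To build $q$ I would use covering, first noting that it forces regularity: from $V_b=\bigcup_{n}V_{b^{\frown}n}$ we get $V_{b^{\frown}n}\subseteq V_b$ for all $b\in\btree$, $n\in\omega$, and hence $V_d\subseteq V_c$ whenever $c\sqsubseteq d$. Starting from $b_0\coloneq a$ with $x\in V_a$, I would recursively pick, at each stage $b_k$ with $x\in V_{b_k}$, an index $n$ with $x\in V_{b_k^{\frown}n}$ (possible since $V_{b_k}=\bigcup_{n}V_{b_k^{\frown}n}$) and set $b_{k+1}\coloneq b_k^{\frown}n$. The union $q\coloneq\bigcup_k b_k$ is a branch with $a\sqsubseteq q$, and $x\in V_{q\uph m}$ holds for every $m$: for $m\geq\lh(a)$ because $q\uph m=b_{m-\lh(a)}$ and $x$ was kept inside, and for $m<\lh(a)$ because $q\uph m\sqsubseteq a$ gives $V_a\subseteq V_{q\uph m}$ by regularity. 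Thus $x\in\fruit{q}{{\bf V}}$ and $q\in S_a\cap\mathsf{branches}_{{\bf V}}(x)$, completing the argument.

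The main obstacle is precisely this recursive construction of $q$, where the covering assumption is used twice: once to descend (each $V_{b_k}$ splits so as to retain $x$) and once, via the derived regularity, to recover membership in the initial segments below level $\lh(a)$.
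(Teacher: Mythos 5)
Your proof is correct: the forward inclusion follows, as you note, from $f^{-1}(x)\subseteq\mathsf{branches}_{\mathbf{V}}(x)$ alone, and the reverse inclusion from your covering-based recursive construction of a branch $q\in S_a$ through $x$ (covering does yield regularity, and the case split $m\geq\lh(a)$ versus $m<\lh(a)$ is handled properly), combined with density of $f^{-1}(x)$ in $\mathsf{branches}_{\mathbf{V}}(x)$ and the fact that $S_a$ is open in the Baire space. The paper itself states this lemma with a reference to Lemma~22 of~\cite{PATRAKEEV2023108410} and gives no proof, so there is nothing to compare against line by line; your argument is the natural one this citation rests on.
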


\begin{lemm} \label{strongLP_has_selector}
 Let ${\bf V} = \ll {V}_{a} \rr_{{a} \in \btree}$ be a ramose $\alpha$-scheme for a space $\ll {X}, \tau \rr$. If $|{X}| \leq \mathfrak{c}$, then there exists a selector ${f}$ on ${\bf V}$ such that ${p}\xrightarrow{\mathbf{V}\!,\tau} {x}$ for all ${x} \in {X}$ and all ${p} \in {f}^{-1}({x})$.
\end{lemm}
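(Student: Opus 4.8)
The plan is to build $f$ in two stages: first I pin down its values on a carefully chosen set of branches so as to force each fibre to be dense (and the map surjective), and then I extend arbitrarily to a total map using (S2). Throughout write $P_x\coloneq\{p\in\bset: x\in\fruit{p}{\mathbf{V}}\text{ and }{p}\xrightarrow{\mathbf{V}\!,\tau} {x}\}$, and note $P_x\subseteq\mathsf{branches}_{\mathbf{V}}(x)$. Condition (S2) says $\bset=\bigcup_{x\in X}P_x$, so every branch can legitimately be sent to some $x$ with $p\in P_x$; and since $\mathbf{V}$ covers $X$ we have $\mathsf{flesh}(\mathbf{V})=V_{\langle\rangle}=X$, so a surjection onto $X$ is a surjection onto the flesh. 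The first observation I would record is that $S_a\cap\mathsf{branches}_{\mathbf{V}}(x)\neq\varnothing$ if and only if $x\in V_a$: if $q\in S_a$ and $x\in\fruit{q}{\mathbf{V}}$ then $x\in V_{q\uph\lh(a)}=V_a$, while the converse is immediate from (RS1). Hence a fibre $f^{-1}(x)\subseteq\mathsf{branches}_{\mathbf{V}}(x)$ is dense in $\mathsf{branches}_{\mathbf{V}}(x)$ precisely when it meets $S_a$ for every $a$ with $x\in V_a$.

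For the first stage I would enumerate all pairs $(a,x)$ with $x\in V_a$ as $\langle(a_\xi,x_\xi)\rangle_{\xi<\kappa}$; since $a$ ranges over the countable set $\btree$ and $x$ over $X$ with $|X|\leq\mathfrak{c}$, we have $\kappa\leq\mathfrak{c}$. By transfinite recursion I pick branches $p_\xi$ with $p_\xi\in S_{a_\xi}\cap P_{x_\xi}$ and $p_\xi\notin\{p_\eta:\eta<\xi\}$. This choice is always possible because (RS1) gives $|S_{a_\xi}\cap P_{x_\xi}|=\mathfrak{c}$ while only $|\xi|<\mathfrak{c}$ branches have been used, so a fresh branch remains. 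Setting $f_0(p_\xi)\coloneq x_\xi$ defines a partial function (well defined, as the $p_\xi$ are distinct) whose fibres already satisfy the density requirement by the previous paragraph, and which is surjective onto $X$: for each $x\in X=V_{\langle\rangle}$ the pair $(\langle\rangle,x)$ occurs in the enumeration, so some $p_\xi$ has $f_0(p_\xi)=x$.

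In the second stage I would extend $f_0$ to a total map $f\colon\bset\to X$ by choosing, for each branch $p$ outside $\{p_\xi:\xi<\kappa\}$, some value $f(p)\in\{x: p\in P_x\}$; this set is nonempty by (S2). Extending only adds values, so density and surjectivity survive. Finally I would verify that $f$ is the required selector: by construction $f(p)=x$ forces $p\in P_x$, whence $p\in\mathsf{branches}_{\mathbf{V}}(x)$ and ${p}\xrightarrow{\mathbf{V}\!,\tau} {x}$, so each fibre is contained in the corresponding branch-set and enjoys the demanded convergence; density of each fibre was arranged in stage one; and $f$ surjects onto $\mathsf{flesh}(\mathbf{V})=X$.

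The only real obstacle is the bookkeeping in the recursion --- keeping the chosen branches distinct so that $f_0$ is a genuine function while still meeting every density requirement. This is exactly where the two hypotheses earn their place: $|X|\leq\mathfrak{c}$ bounds the number of requirements (and makes a surjection onto $X$ possible at all), while the strengthening of (S1) to (RS1) --- continuum many suitable branches rather than merely one --- guarantees that at each of the $\leq\mathfrak{c}$ stages an unused branch survives. With a plain $\alpha$-scheme a single density requirement might exhaust its candidates and stall the recursion; the ramose condition is precisely what rules this out.
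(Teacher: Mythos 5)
Your proof is correct and takes essentially the same approach as the paper's: a transfinite recursion that uses (RS1) to pick, for each density requirement, a branch in $S_{a}\cap P_{x}$ distinct from the fewer-than-$\mathfrak{c}$ branches already chosen, followed by an extension of the resulting partial map to all of $\bset$ via (S2). The only difference is bookkeeping: the paper indexes the recursion by points ${x}\in{X}$, building pairwise disjoint countable dense sets ${Q}_{x}$, whereas you flatten it into a single recursion over the pairs $({a},{x})$ with ${x}\in{V}_{a}$.
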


\begin{proof}
  For all ${x} \in {X}$ and ${a} \in \btree$ denote $$
    {P}({x}, {a}) \coloneq \{{p} \in \mathsf{branches}_{\bf {V}}({x}) \cap {S}_{a} \colon {p}\xrightarrow{\mathbf{V}\!,\tau} {x}\}.
$$

    Take ${x} \in {X}$ and ${a} \in \btree$. Note that if $\mathsf{branches}_{\bf {V}}({x}) \cap {S}_{a} \neq \varnothing$, then ${x} \in {V}_{a}$, and so, by (RS1), $|{P}({x}, {a})| = \mathfrak{c}$. Thus we have
    \begin{equation} \label{ZOV}
        \mathsf{branches}_{\bf {V}}({x}) \cap {S}_{a} \neq \varnothing \to |{P}({x}, {a})| = \mathfrak{c}
    \end{equation}

  By transfinite recursion on ${X}$ well-ordered in the type of its cardinality, it is easy to build an indexed family $\langle {Q}_{x} \rangle_{{x} \in X}$ such that
 \begin{itemize}
 \item[\ding{226}\,] ${Q}_{x}$ is a countable dense subset of $\mathsf{branches}_{\bf {V}}({x})$ for all ${x} \in X$,
  \item[\ding{226}\,] ${Q}_{x} \cap {Q}_{y} = \emptyset$ for all ${x} \neq {y} \in \mathsf{flesh}({\bf {V}})$, and
  \item[\ding{226}\,] ${p}\xrightarrow{\mathbf{V}\!,\tau} {x}$ for all ${x} \in X$ and all ${p} \in {Q}_{x}$. 
 \end{itemize}
Let ${x} \in X$. Suppose that the sets ${Q}_{y}$ have been chosen for all ${y}$ before ${x}$. We shall find a countable dense set ${Q}_{x}$ in $\mathsf{branches}_{\bf {V}}({x})$ such that ${Q}_{x} \cap {Q}_{y} = \varnothing$ for all ${y}$ before ${x}$. Note that $\{{S}_{a} \cap \mathsf{branches}_{\bf {V}}({x}) : {a} \in \btree\}$ is a countable base for  $\mathsf{branches}_{\bf {V}}({x})$. From (\ref{ZOV}) it follows that for every ${a} \in \btree$, if ${S}_{a} \cap \mathsf{branches}_{\bf {V}}({x}) \neq \varnothing$, then there exists ${p}_{a} \in {S}_{a} \cap \mathsf{branches}_{\bf {V}}({x})$ such that ${p}_{a}\xrightarrow{\mathbf{V}\!,\tau} {x}$ and ${p}_{a} \nin {Q}_{y}$ for all ${y}$ before ${x}$. Then ${Q}_{x} \coloneq \{{p}_{a}\ \colon\ {a} \in \btree \}$ satisfies required conditions.

Now we can construct a selector ${f}\colon\bset\to{X}$ on ${\bf {V}}$. If ${p}
 \in{Q}_{x}$ for some ${x}\in {X}$, then set ${f}({p})\coloneq{x}$.  If ${p}\nin\bigcup_{{x}\in{X}}{Q}_{x}$, then using (S2) choose ${f}({p})\in\fruit{p}{{\bf {V}}}$ such that ${p}\xrightarrow{\mathbf{V}\!,\tau} {f}({p})$. It is easy to see that ${f}$ is a selector on ${\bf {V}}$.
\end{proof}

\begin{deff}[Definition~23 in~\cite{PATRAKEEV2023108410}]
 Let $\langle {X}, \tau \rangle$ be a space, ${\bf {V}}$ a Souslin scheme that covers ${X}$, and ${f}$ a selector on ${\bf {V}}$. Then $\sigma_{\tau, {f}}$ is the topology on $\bset$ generated by the subbase $\{{f}^{-1}[{U}]: {U} \in \tau\}\cup \{{S}_{a}: {a} \in \btree\}$.
\end{deff}

\begin{lemm}[Lemma~26 in~\cite{PATRAKEEV2023108410}] \label{fXV_is_open}
 Let $\langle {X}, \tau \rangle$ be a space, ${\bf {V}}$ an open Souslin scheme on $\langle {X}, \tau \rangle$ that covers ${X}$, and ${f}$ a selector on ${\bf {V}}$. Then ${f} \colon \langle \bset, \sigma_{\tau, {f}} \rangle \to \langle {X}, \tau \rangle$ is a continuous open surjection.\hfill\qed
\end{lemm}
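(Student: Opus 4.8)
The plan is to verify the three asserted properties---surjectivity, continuity, and openness---in that order, with only openness requiring real work.

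First I would settle surjectivity and continuity, both of which are essentially automatic. Since $\mathbf{V}$ covers $X$, we have $V_{\langle\rangle}=X$ and $V_a=\bigcup_{n}V_{a^{\frown}n}$; an easy induction starting from $V_{\langle\rangle}=X$ shows $V_a\subseteq X$ for every $a\in\btree$, whence $\mathsf{flesh}(\mathbf{V})=\bigcup_{a}V_a=X$. A selector on $\mathbf{V}$ is by definition a surjection onto $\mathsf{flesh}(\mathbf{V})$, so $f\colon\bset\to X$ is onto. Continuity is immediate from the construction of $\sigma_{\tau,f}$: for each $U\in\tau$ the set $f^{-1}[U]$ belongs to the defining subbase, hence is $\sigma_{\tau,f}$-open, so preimages of open sets are open.

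The substantive part is openness, and the first step is to reduce it to a tractable class of basic open sets. Because $\sigma_{\tau,f}$ is generated by the subbase $\{f^{-1}[U]:U\in\tau\}\cup\{S_a:a\in\btree\}$, every basic open set is a finite intersection of subbasic sets. The sets $f^{-1}[U]$ are closed under finite intersection, since $f^{-1}[U_1]\cap f^{-1}[U_2]=f^{-1}[U_1\cap U_2]$ and $\tau$ is closed under finite intersection; and a finite intersection of the clopen sets $S_{a_i}$ is either empty (if the $a_i$ are pairwise $\sqsubseteq$-incompatible) or equals $S_a$, where $a$ is the longest of the $a_i$ (when they form a chain). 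Hence every nonempty basic open set has the form $f^{-1}[U]\cap S_a$ with $U\in\tau$ and $a\in\btree$ (the degenerate cases being absorbed by $U=X$ or $a=\langle\rangle$). Since $f$ preserves unions, it suffices to show that $f\bigl[f^{-1}[U]\cap S_a\bigr]$ is $\tau$-open for all such $U$ and $a$.

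For this I would combine the elementary set identity $f[f^{-1}[U]\cap A]=U\cap f[A]$, valid for any function $f$ and any sets $U,A$, with Lemma~\ref{main_property_of_fiber_scheme}. The latter applies because $\mathbf{V}$ covers $\mathsf{flesh}(\mathbf{V})=X$, and it yields $f[S_a]=V_a$ for every $a\in\btree$. Therefore
\[
f\bigl[f^{-1}[U]\cap S_a\bigr]=U\cap f[S_a]=U\cap V_a,
\]
and since $U\in\tau$ while $V_a\in\tau$ (the scheme $\mathbf{V}$ is open on $\langle X,\tau\rangle$), the image $U\cap V_a$ is open in $X$. Thus images of basic open sets are open, and openness of $f$ follows.

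The only place where anything nontrivial is used is the equality $f[S_a]=V_a$, which is exactly the content of Lemma~\ref{main_property_of_fiber_scheme}; once the reduction to basic open sets of the form $f^{-1}[U]\cap S_a$ is in hand, the remainder is routine. I note in passing that if one preferred not to invoke that lemma, one could argue $f[S_a]=V_a$ directly: the inclusion $\subseteq$ uses that $f(p)\in\fruit{p}{\mathbf{V}}\subseteq V_a$ whenever $a\sqsubseteq p$ (because $p\in f^{-1}(f(p))\subseteq\mathsf{branches}_{\bf{V}}(f(p))$ forces $f(p)\in\fruit{p}{\mathbf{V}}$), while $\supseteq$ uses the covering property to build, for each $y\in V_a$, a branch $q\in S_a$ with $y\in\fruit{q}{\mathbf{V}}$, after which density of $f^{-1}(y)$ in $\mathsf{branches}_{\bf{V}}(y)$ produces a preimage of $y$ inside $S_a$.
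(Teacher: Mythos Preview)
Your proof is correct. The paper itself does not prove this lemma but quotes it from~\cite{PATRAKEEV2023108410} (note the trailing \qed); your argument is exactly the natural one, and indeed the paper records as separate lemmas precisely the two ingredients you use---Lemma~\ref{main_property_of_fiber_scheme} for $f[S_a]=V_a$ and Lemma~\ref{surjection} for the identity $f[f^{-1}[U]\cap S]=U\cap V$---which strongly suggests the intended proof is the one you wrote.
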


\begin{lemm} \label{surjection}
 Let ${f} \colon {A} \to {X}$ be a surjection, ${S} \subseteq {A}$, ${V} \subseteq {X}$, and ${f}[{S}] = {V}$. Then ${f}\big[{f}^{-1}[{U}] \cap{S}\big] = {U} \cap {V}$ for all ${U} \subseteq {X}$. \hfill \qed 
\end{lemm}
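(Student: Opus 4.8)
The plan is to prove the set equality $f\big[f^{-1}[U] \cap S\big] = U \cap V$ by a routine double-inclusion argument, chasing elements through the definitions of image and preimage. I expect no genuine obstacle: the statement is a purely set-theoretic identity, and in fact the surjectivity hypothesis on $f$ is not needed — only the equality $f[S] = V$ is used. (The surjectivity presumably just reflects how the lemma is applied later.)

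For the inclusion $f\big[f^{-1}[U] \cap S\big] \subseteq U \cap V$, I would take any $y \in f\big[f^{-1}[U] \cap S\big]$ and pick $a \in f^{-1}[U] \cap S$ with $f(a) = y$. The membership $a \in f^{-1}[U]$ gives $y = f(a) \in U$, while $a \in S$ gives $y = f(a) \in f[S] = V$; hence $y \in U \cap V$.

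For the reverse inclusion $U \cap V \subseteq f\big[f^{-1}[U] \cap S\big]$, I would take $y \in U \cap V$. Since $y \in V = f[S]$, there is some $a \in S$ with $f(a) = y$; and since $y \in U$, this $a$ satisfies $f(a) \in U$, i.e.\ $a \in f^{-1}[U]$. Thus $a \in f^{-1}[U] \cap S$ and $y = f(a) \in f\big[f^{-1}[U] \cap S\big]$. Combining the two inclusions yields the desired equality for every $U \subseteq X$, which completes the argument.
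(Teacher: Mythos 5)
Your proof is correct and is exactly the routine double-inclusion element chase that the paper intends: the lemma is stated with an immediate $\qed$ and no written proof, so your argument fills in precisely the expected (trivial) details. Your side observation is also accurate — surjectivity of $f$ is never used, only the hypothesis $f[S]=V$, and indeed in the paper's application (Lemma~\ref{has_lpb}) the relevant equality $f[S_a]=V_a$ comes from Lemma~\ref{main_property_of_fiber_scheme}.
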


\begin{lemm}\label{has_lpb}
 Let ${\bf {V}} = \ll {V}_{a} \rr_{{a} \in \btree}$ be an open complete Souslin scheme on $\ll {X}, \tau \rr$ that covers ${X}$ and let ${f}$ be a selector on ${\bf V}$ such that ${p}\xrightarrow{\mathbf{V}\!,\tau} {x}$ for all ${x} \in {X}$ and all ${p} \in {f}^{-1}({x})$. Then ${\bf {S}}$ is a Lusin $\pi$-base for $\ll \bset, \sigma_{\tau,{f}} \rr$.
\end{lemm}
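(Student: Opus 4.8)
The plan is to verify the four defining properties of a Lusin $\pi$-base (Definition~\ref{def.luz.p.base}) for $\mathbf{S}$ regarded on $\ll\bset,\sigma_{\tau,{f}}\rr$. Three of them are immediate and do not really involve $\sigma_{\tau,{f}}$: the scheme $\mathbf{S}$ partitions $\bset$ and has strict branches, since $\fruit{q}{\mathbf{S}}=\bigcap_{n}S_{q\uph n}=\{q\}$ for every $q\in\bset$ (purely combinatorial facts about $\mathbf{S}$, already contained in Remark~\ref{rem.baire.space}), and $\mathbf{S}$ is open on $\ll\bset,\sigma_{\tau,{f}}\rr$ because each $S_{a}$ belongs to the subbase generating $\sigma_{\tau,{f}}$. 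Thus the whole content of the lemma is condition (L6), which by Remark~\ref{rem.L6'} I may replace by (L6'). In fact I would prove the slightly stronger statement that ${r}\xrightarrow{\mathbf{S}\!,\sigma_{\tau,{f}}}{r}$ for every ${r}\in\bset$; since for such $r$ every $\sigma_{\tau,{f}}$-neighbourhood $W$ of $r$ satisfies $r\xrightarrow{\bf {S}}W$, choosing any $n$ from the witnessing infinite set and putting $a\coloneq r\uph n$ yields $r\in S_{a}$ and $\shoot{a}{\mathbf{S}}\inn W$, which is exactly (L6').

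To prove ${r}\xrightarrow{\mathbf{S}\!,\sigma_{\tau,{f}}}{r}$, fix $r\in\bset$ and a $\sigma_{\tau,{f}}$-neighbourhood $W$ of $r$; I must show $r\xrightarrow{\bf {S}}W$. Since $r\xrightarrow{\bf {S}}(\cdot)$ is monotone under passing to supersets, I may choose a basic neighbourhood $W_{0}\subseteq W$ of $r$ and show $r\xrightarrow{\bf {S}}W_{0}$ instead; collecting the finitely many subbasic factors of $W_{0}$, I may assume $W_{0}=f^{-1}[U]\cap S_{b}$ with $U\in\tau$, $f(r)\in U$, and $b\sqsubseteq r$ (the $S$-factors are pairwise $\sqsubseteq$-comparable initial segments of $r$, so their intersection is a single $S_{b}$, and the $f^{-1}$-factors merge into one, because $\tau$ is a topology). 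The key input is the hypothesis of the lemma applied to $x=f(r)$ and $p=r\in f^{-1}(f(r))$: it gives $r\xrightarrow{\mathbf{V}\!,\tau}f(r)$, hence $r\xrightarrow{\bf {V}}U$, so there is an infinite set $L\subseteq\omega$ with $\shoot{r\uph n}{\mathbf{V}}\inn U$ for all $n\in L$.

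It remains to transport this $\mathbf{V}$-statement back to $\mathbf{S}$ and to absorb the factor $S_{b}$. Because $\mathbf{V}$ covers $X=\mathsf{flesh}(\mathbf{V})$, Lemma~\ref{main_property_of_fiber_scheme} gives $f[S_{c}]=V_{c}$ for all $c\in\btree$, so $f\big[\widetilde{S}^{k}_{c}\big]=\widetilde{V}^{k}_{c}$ for all $c$ and $k$; consequently $\widetilde{V}^{k}_{c}\subseteq U$ forces $\widetilde{S}^{k}_{c}\subseteq f^{-1}[U]$, i.e. $\shoot{c}{\mathbf{S}}\inn f^{-1}[U]$ whenever $\shoot{c}{\mathbf{V}}\inn U$. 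Applying this along $L$ I obtain $r\xrightarrow{\bf {S}}f^{-1}[U]$. Finally I pick $n\in L$ with $n\geq\lh(b)$ and set $a\coloneq r\uph n$; then $b\sqsubseteq a$, the witnessing $k$ gives $\widetilde{S}^{k}_{a}\subseteq f^{-1}[U]$, and at the same time $\widetilde{S}^{k}_{a}\subseteq S_{a}\subseteq S_{b}$, so $\widetilde{S}^{k}_{a}\subseteq W_{0}$. As $L$ is infinite, infinitely many such $n$ exist, whence $r\xrightarrow{\bf {S}}W_{0}$ and therefore $r\xrightarrow{\bf {S}}W$, which completes the argument. The main obstacle is precisely this last transport step: one must turn the ``forward'' selector identity $f[S_{c}]=V_{c}$ into the needed ``backward'' inclusion $\widetilde{S}^{k}_{c}\subseteq f^{-1}[U]$, and simultaneously arrange, by taking $n$ large, that the purely Baire-space factor $S_{b}$ of the neighbourhood is satisfied for free.
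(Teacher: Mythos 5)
Your proof is correct and follows essentially the same route as the paper's: reduce (L6') to basic neighbourhoods of the form $f^{-1}[U]\cap S_{b}$, apply the hypothesis ${r}\xrightarrow{\mathbf{V}\!,\tau}f({r})$ to get a $\mathbf{V}$-statement, transport it back to $\mathbf{S}$ via the selector identity $f[S_{a}]=V_{a}$ (Lemma~\ref{main_property_of_fiber_scheme}), and absorb the $S_{b}$ factor by choosing $n\geq\lh(b)$ from the infinite witnessing set. The only cosmetic difference is that the paper applies the hypothesis to the neighbourhood $U\cap V_{b}$ of $f(r)$ (computed via Lemma~\ref{surjection}), whereas you use $U$ alone and handle $S_{b}$ purely geometrically, which changes nothing essential.
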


\begin{proof}
 Using Remark \ref{rem.L6'} it is enough to check (L6'). Note that the family   $\{{f}^{-1}[{U}]\cap {S}_{a}:{U}\,{\in}\,\tau,\: {a}\,{\in}\,\btree\}$ is a base for the space $\ll\bset,\sigma_{\tau, {f}}\rr$. 
 Take ${x} \in \bset$ and its arbitrary base neighbourhood ${f}^{-1}[U] \cap {S}_{b}$. 
 Since ${x}\xrightarrow{\mathbf{V}\!,\tau} {f}({x})$ and ${f}({x}) \in {f}[{f}^{-1}[U] \cap {S}_{b}] = {U} \cap {V}_{b}$ (the equality follows from Lemma \ref{main_property_of_fiber_scheme} and Lemma \ref{surjection}), we see that ${x}\xrightarrow{\mathbf{V}} {U} \cap {V}_{b}$, and so there exists $n \geq \lh({b})$ such that $\shoot{{x} \upharpoonright {n}}{{\bf {V}}}\inn {U} \cap {V}_{b}$. 
 Take ${a}\coloneq{x}\upharpoonright{n}$; then ${x} \in {S}_{a}$. 
 Note that ${S}_{({x} \upharpoonright {n})\hspace{0.5pt}^{\frown}{k}}\subseteq{S}_{b}$ for all ${k}\in\omega$ because ${x}\in{S}_{b}$ and $n \geq \lh({b})$. It follows that $\shoot{a}{{\bf {S}}}\inn {f}^{-1}[{U}] \cap {S}_{b}$.
\end{proof}

\begin{lemm} \label{has_lp_sch}
 Let ${\bf {V}} = \ll {V}_{a} \rr_{{a} \in \btree}$ be a Lusin $\pi$-base for a space $\ll {X}, \tau \rr$ and ${f}\colon \ll {X}, \tau \rr \to \ll {Y}, \sigma \rr$ be an open continuous surjection. Then ${f}\l{\bf {V}}\r$ is an $\alpha$-scheme for $\ll {Y}, \sigma \rr$.
\end{lemm}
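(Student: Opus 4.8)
The plan is to verify directly that $f\l{\bf V}\r = \ll f[V_a]\rr_{a\in\btree}$ satisfies every clause in the definition of an $\alpha$-scheme, pushing each property of ${\bf V}$ forward through the open continuous surjection $f$. The starting observation is that ${\bf V}$ is itself an $\alpha$-scheme for $\ll X,\tau\rr$ (by the remark that every Lusin $\pi$-base is an $\alpha$-scheme, which rests on Lemma \ref{lpb_appl}), so it is open, complete, covers $X$, and satisfies (S1) and (S2). Throughout I would use that images commute with unions, $f\big[\bigcup_n A_n\big] = \bigcup_n f[A_n]$, and that $f\big[\bigcap_n A_n\big] \subseteq \bigcap_n f[A_n]$.

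First I would dispose of the structural clauses. Openness is immediate: $f$ is open and each $V_a \in \tau$, so each $f[V_a] \in \sigma$. For covering, $f[V_{\ll\rr}] = f[X] = Y$ by surjectivity, and $f[V_a] = \bigcup_n f[V_{a\hspace{0.5pt}^{\frown}n}]$ because ${\bf V}$ covers $X$ and images commute with unions. For completeness, fix $q \in \bset$; since ${\bf V}$ is complete, pick $x \in \fruit{q}{\bf V} = \bigcap_n V_{q\uph n}$, whence $f(x) \in \bigcap_n f[V_{q\uph n}] = \fruit{q}{f\l{\bf V}\r}$, so this fruit is nonempty. Note that only the easy inclusion for intersections is needed, since we merely transport a witness point forward.

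The heart of the argument is the transport of the two arrow conditions, which is handled by Remark \ref{appl_pres_con_fun}: if $p \xrightarrow{\mathbf{V}\!,\tau} x$, then $p \xrightarrow{f\l{\bf V}\r,\sigma} f(x)$, using continuity of $f$. For (S1), take $a \in \btree$ and $y \in f[V_a]$, and choose a preimage $x \in V_a$ with $f(x) = y$; applying (S1) for ${\bf V}$ yields $p \in S_a$ with $x \in \fruit{p}{\bf V}$ and $p \xrightarrow{\mathbf{V}\!,\tau} x$. Then Remark \ref{appl_pres_con_fun} gives $p \xrightarrow{f\l{\bf V}\r,\sigma} f(x) = y$, while $x \in \bigcap_n V_{p\uph n}$ forces $y = f(x) \in \bigcap_n f[V_{p\uph n}] = \fruit{p}{f\l{\bf V}\r}$; this is exactly (S1) for $f\l{\bf V}\r$. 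Condition (S2) is the same computation run from the codomain side: for $p \in \bset$, use completeness to obtain $x \in \fruit{p}{\bf V}$, then Lemma \ref{lpb_appl} to obtain $p \xrightarrow{\mathbf{V}\!,\tau} x$, and finally transport to get $f(x) \in \fruit{p}{f\l{\bf V}\r}$ together with $p \xrightarrow{f\l{\bf V}\r,\sigma} f(x)$.

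I expect the only delicate point to be the bookkeeping in (S1): one must select the preimage $x$ of $y$ inside $V_a$ \emph{before} invoking (S1) for ${\bf V}$, so that the witnessing branch $p$ lands in $S_a$ and so that the single point $x$ simultaneously certifies $y \in \fruit{p}{f\l{\bf V}\r}$ and, through Remark \ref{appl_pres_con_fun}, the relation $p \xrightarrow{f\l{\bf V}\r,\sigma} y$. It is worth emphasizing that (S1) and (S2) demand only membership $y \in \fruit{p}{f\l{\bf V}\r}$ rather than any equality of fibers, so the one-sided inclusion $f\big[\bigcap_n V_{p\uph n}\big] \subseteq \bigcap_n f[V_{p\uph n}]$ suffices throughout and no fullness of the image scheme is required. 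Everything else is routine set-theoretic manipulation of images under $f$.
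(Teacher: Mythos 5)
Your proof is correct and takes essentially the same route as the paper's: both arguments transport (S1) and (S2) through $f$ via Remark~\ref{appl_pres_con_fun}, with the witnessing branch $p\in S_a$ ultimately coming from Lemma~\ref{lpb_appl} (you reach it through the remark that a Lusin $\pi$-base is an $\alpha$-scheme, while the paper produces $p$ directly from strict branches and partitioning). The only difference is that you also verify the routine clauses (openness, covering, completeness), which the paper leaves implicit.
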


\begin{proof}
 Let us check (S1), that is, prove 
 \begin{equation}
  \forall {a}\in\btree\enskip \forall {x}\in{f}[{V}_{a}]\enskip \exists {p}\in{S}_{a} \ [ {x}\in\fruit{{p}}{{f}\l{\bf {V}}\r} \text{ and } {p}\xrightarrow{{f}\l\mathbf{V}\r,\sigma} {x}].
 \end{equation}
 Take ${a}\in\btree$ and ${x}\in{f}[{V}_{a}]$. There exists ${y} \in {V}_{a}$ such that ${f}({y}) = {x}$. Take ${p} \in \bset$ such that $\{{y}\}=\fruit{{p}}{{\bf {V}}}$, then ${x} \in \fruit{{p}}{{f}\l{\bf {V}}\r}$ and ${p}\in{S}_{a}$. By Lemma \ref{lpb_appl}, ${p}\xrightarrow{\mathbf{V},\tau} {y}$, so by Remark~\ref{appl_pres_con_fun} we have
 \begin{equation}
  {p}\xrightarrow{{f}\l\mathbf{V}\r,\sigma} {x}.
 \end{equation}

 Now check (S2), that is,  prove 
 \begin{equation}
  \forall {p}\in\bset\enskip \exists {x}\in\fruit{{p}}{{f}\l{\bf {V}}\r}\ [{p}\xrightarrow{{f}\l\mathbf{V}\r,\sigma} {x}].
 \end{equation}
 Take ${p} \in \bset$. Consider ${y} \in {X}$ such that $\{{y}\}=\fruit{{p}}{{\bf {V}}}$, then ${f}({y}) \in \fruit{{p}}{{f}\l{\bf {V}}\r}$. From Lemma \ref{lpb_appl} and Remark~\ref{appl_pres_con_fun} it follows that
 \begin{equation}
  {p}\xrightarrow{{f}\l\mathbf{V}\r,\sigma} {f}({y}).
 \end{equation}
\end{proof}

\begin{proof}[\textbf{Proof of Theorem~\ref{desc_open_LPB}}]
    Suppose that a space $\ll {X}, \tau \rr$ is a continuous open image of a space with Lusin $\pi$-base, then from Lemma \ref{has_lp_sch} it follows that there exists an $\alpha$-scheme for $\ll {X}, \tau \rr$.

    Suppose that $|X| \leq \mathfrak{c}$ and there exists an $\alpha$-scheme for $\ll {X}, \tau \rr$. Then from Lemma \ref{LP_to_strongLP} it follows that there exists a ramose $\alpha$-scheme ${\bf V} $ for $\ll {X}, \tau \rr$. Now from Lemma \ref{strongLP_has_selector} we see that there exists a selector ${f}$ on ${\bf V}$ such that  ${p}\xrightarrow{\mathbf{V}\!,\tau} {x}$ for all ${x} \in {X}$ and  all ${p} \in {f}^{-1}({x})$. From Lemma \ref{has_lpb} it follows that $\ll \bset, \sigma_{\tau,{f}} \rr$ has a Lusin $\pi$-base and from Lemma \ref{fXV_is_open} it follows that $f \colon \ll \bset, \sigma_{\tau,{f}} \rr \to \ll {X}, \tau \rr$ is a continuous open surjection.
\end{proof}

\section{An example of a zero-dimensional $\pi$-space without an $\alpha$-scheme}

In this section we will prove that the class of open images of spaces with a Lusin $\pi$-base is a proper subclass of the class of open images of $\pi\nos$-spaces:

\begin{teo}\label{teo.main}
 There exists a zero-dimensional $\pi$-space $X$ such that $X$ is not a continuous open image of a space with a Lusin $\pi$-base.
\end{teo}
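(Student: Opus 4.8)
The plan is to invoke Theorem~\ref{desc_open_LPB}. A $\pi$-space admits a continuous quasi-open \emph{bijection} onto $\bset$, so it has cardinality exactly $\mathfrak{c}$; hence the requirement $|X|\leq\mathfrak{c}$ of Theorem~\ref{desc_open_LPB} is automatic, and it suffices to construct a zero-dimensional $\pi$-space $X$ carrying no $\alpha$-scheme. As noted in the introduction, for second-countable spaces the two classes coincide, so $X$ must fail to be second countable. Accordingly I would seek $X$ in the form of a standard $\pi$-space $\langle\bset,\tau\rangle$ with $\tau\supsetneq\taun$ and $\taun\setminus\{\varnothing\}$ still a $\pi$-base; by the definition of a standard $\pi$-space together with Remark~\ref{rem.st.pi.space} this makes $X$ a $\pi$-space, while a clopen refinement of $\{S_a:a\in\btree\}$ yields zero-dimensionality.

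The conceptual key is a reformulation of the convergence relation. Unwinding the definitions of $\shoot{b}{\mathbf{W}}$ and of $p\xrightarrow{\mathbf{W}\!,\tau}x$, one sees that $p\xrightarrow{\mathbf{W}\!,\tau}x$ forces the countable family of shoots $\{\widetilde{W}^{\,k}_{p\uph n}:n,k\in\omega\}$ along $p$ to be a local $\pi$-base at $x$, i.e.\ every $\tau$-neighbourhood of $x$ contains one of these shoots. (Completeness is used here: since $\fruit{q}{\mathbf{W}}\neq\varnothing$ for every $q\in\bset$, each $W_a$ is nonempty, so every shoot is a nonempty open set.) Thus an $\alpha$-scheme is precisely an open complete covering Souslin scheme whose branch-shoots supply, \emph{coherently}, local $\pi$-bases at the prescribed limits---via (S1) at every point of every $W_a$ and via (S2) along every branch.

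I would then design $\tau$ so as to make this coherence impossible. The refinement must create, across the tree, so many distinct ``directions'' at the relevant points that no single branch of any open complete covering scheme can align its countably many shoots into a local $\pi$-base at a limit in the manner (S1) and (S2) jointly demand---while, crucially, every nonempty $\tau$-open set still contains a basic $S_c$, so that $\taun\setminus\{\varnothing\}$ remains a $\pi$-base. To prove non-existence I would argue by contradiction: assume $\mathbf{W}=\langle W_a\rangle_{a\in\btree}$ is an $\alpha$-scheme; by Proposition~\ref{LP_pi_base} it is a $\pi$-base Souslin scheme, so I may read off its shoots as nonempty open sets containing basic clopen pieces, and then play the encoded obstruction of $\tau$ against the local-$\pi$-base requirement extracted in the previous paragraph, reaching a contradiction with (S1) (equivalently with (S2)).

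The main obstacle, and the delicate combinatorial core of the construction, is the simultaneous calibration of $\tau$ in the third paragraph. On one side $\tau$ must be fine enough (hence $X$ non-second-countable) that the convergence coherence is destroyed for \emph{every} open complete covering scheme, not merely for a fixed one; on the other side $\tau$ must be coarse enough that $\taun\setminus\{\varnothing\}$ stays a $\pi$-base and a clopen base survives, keeping $X$ a zero-dimensional $\pi$-space. Balancing these opposed demands---fine against coarse, and uniformly over all schemes at once---is where the real work lies; once a topology meeting both is in hand, the contradiction should follow by routine unwinding of the definitions of the shoot and of $\xrightarrow{\mathbf{W}\!,\tau}$.
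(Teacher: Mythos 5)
Your frame is correct and coincides with the paper's: by Theorem~\ref{desc_open_LPB}, and since a $\pi$-space has cardinality exactly $\mathfrak{c}$, it suffices to produce a zero-dimensional standard $\pi$-space $\ll\bset,\tau\rr$ with no $\alpha$-scheme; your reformulation of ${p}\xrightarrow{\mathbf{W}\!,\tau}{x}$ as ``the shoots along $p$ form a local $\pi$-net at $x$'' is also accurate and is indeed the lever for the contradiction. But what you have written is a plan, not a proof. The topology $\tau$ is never constructed, and you concede that its calibration is ``where the real work lies.'' That calibration \emph{is} the proof: in the paper it occupies Proposition~\ref{stand_pi_sp} (generating a standard $\pi$-space from sets $\{x\}\cup F(x)$, with zero-dimensionality coming from the closure condition $\cl{F(x)}{\taun}=\{x\}\cup F(x)$, not from a generic ``clopen refinement'') and, above all, Proposition~\ref{function.F}, a transfinite construction whose combinatorial core (the families $H^{n}_{k}$ of tail-unions of basic clopen sets, chosen so that no shoot along the diagonalized branch ever fits inside $\{x^\alpha\}\cup U^\alpha$) is anything but ``routine unwinding.''

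There is also a structural obstacle your sketch does not confront, and which any execution of your plan must resolve: the diagonalization has to defeat \emph{every} $\alpha$-scheme for $\ll\bset,\tau\rr$, yet such schemes are families of $\tau$-open sets, and $\tau$ is the very object being built---so they cannot be enumerated in advance. The paper breaks this circularity by transferring everything to the fixed Baire space: by Lemma~\ref{scheme_native} every open Souslin scheme on a standard $\pi$-space is semi-open on $\bspace$, so if $\mathbf{V}$ were an $\alpha$-scheme for $\ll\bset,\tau\rr$ then (using Proposition~\ref{LP_pi_base}) $\mathbf{W}\coloneq\mathsf{Int}(\mathbf{V},\taun)$ is a regular open $\pi$-base Souslin scheme with nonempty leaves on $\bspace$. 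There are at most $\mathfrak{c}$ such schemes on $\bspace$, enumerable before $\tau$ exists, and Proposition~\ref{function.F} diagonalizes against them: for each $\mathbf{W}^\alpha$ it produces $p^\alpha$, $x^\alpha$, and open $U^\alpha$ with $\cl{U^\alpha}{\taun}=U^\alpha\cup\{x^\alpha\}$, such that $\shoot{p^\alpha\uph n}{\mathbf{W}^\alpha}\ninn\{x^\alpha\}\cup U^\alpha$ for all $n$, \emph{and}---this is the clause requiring the semi-open machinery---$\fruit{p^\alpha}{\mathbf{V}}=\{x^\alpha\}$ for every regular semi-open $\mathbf{V}$ with $\mathsf{Int}(\mathbf{V},\taun)=\mathbf{W}^\alpha$. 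That last clause pins down $x^\alpha$ as the \emph{only} candidate limit along $p^\alpha$, so (S2) fails outright; without it, $\fruit{p^\alpha}{\mathbf{V}}$ could contain other points at which convergence succeeds, and your intended contradiction evaporates. None of these mechanisms (the interior transfer, the closure condition on $F$, the construction of the sets $U^\alpha$) is present, even implicitly, in your proposal.
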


\begin{lemm}\label{shoot2}
Let ${\bf {V}}$ be a Souslin scheme and ${a} \in \btree$. Then:

1. If $\shoot{a}{{\bf {V}}} \ninn{A}$ and ${A}\supseteq{B}$, then $\shoot{a}{{\bf {V}}} \ninn{B}$.

2. $\shoot{a}{{\bf {V}}} \inn{A} \cap{B}$ if and only if $\shoot{a}{{\bf {V}}} \inn{A}$ and $\shoot{a}{{\bf {V}}} \inn{B}$.
\hfill\qed
\end{lemm}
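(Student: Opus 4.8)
The plan is to unfold the two definitions and then exploit the single structural fact that makes everything work: the family $\shoot{a}{{\bf {V}}}=\big\{\widetilde{V}^{k}_{a}:k\in\omega\big\}$ is nested downward. Recall that $\widetilde{V}^{k}_{a}=\bigcup_{j\geq k}V_{a\hspace{0.5pt}^{\frown}j}$, so whenever $k'\geq k$ the index set $\{j:j\geq k'\}$ is contained in $\{j:j\geq k\}$ and hence $\widetilde{V}^{k'}_{a}\subseteq\widetilde{V}^{k}_{a}$. Unwinding the abbreviation $\inn$, the statement $\shoot{a}{{\bf {V}}}\inn U$ means $\exists k\in\omega\,[\widetilde{V}^{k}_{a}\subseteq U]$, and $\shoot{a}{{\bf {V}}}\ninn U$ is its negation $\forall k\in\omega\,[\widetilde{V}^{k}_{a}\not\subseteq U]$.

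For part 1 I would argue member by member. Fix $k\in\omega$. Since $B\subseteq A$, if $\widetilde{V}^{k}_{a}\subseteq B$ then $\widetilde{V}^{k}_{a}\subseteq A$; contrapositively, $\widetilde{V}^{k}_{a}\not\subseteq A$ forces $\widetilde{V}^{k}_{a}\not\subseteq B$. From the hypothesis $\shoot{a}{{\bf {V}}}\ninn A$ we have $\widetilde{V}^{k}_{a}\not\subseteq A$ for every $k$, hence $\widetilde{V}^{k}_{a}\not\subseteq B$ for every $k$, which is exactly $\shoot{a}{{\bf {V}}}\ninn B$. Note that part 1 needs no directedness, only the monotonicity of $\subseteq$ in its right argument.

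For part 2 the forward implication is immediate: a witness $k$ with $\widetilde{V}^{k}_{a}\subseteq A\cap B$ yields $\widetilde{V}^{k}_{a}\subseteq A$ and $\widetilde{V}^{k}_{a}\subseteq B$ simultaneously. The reverse implication is where the nestedness is used. From $\shoot{a}{{\bf {V}}}\inn A$ and $\shoot{a}{{\bf {V}}}\inn B$ I would pick $k_{1}$ with $\widetilde{V}^{k_{1}}_{a}\subseteq A$ and $k_{2}$ with $\widetilde{V}^{k_{2}}_{a}\subseteq B$, then set $k\coloneq\max(k_{1},k_{2})$. The monotonicity observation gives $\widetilde{V}^{k}_{a}\subseteq\widetilde{V}^{k_{1}}_{a}\subseteq A$ and $\widetilde{V}^{k}_{a}\subseteq\widetilde{V}^{k_{2}}_{a}\subseteq B$, so $\widetilde{V}^{k}_{a}\subseteq A\cap B$ and therefore $\shoot{a}{{\bf {V}}}\inn A\cap B$.

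Both parts are routine once the definitions are spelled out, and I do not anticipate any real obstacle. The only point worth isolating is the monotonicity $k'\geq k\Rightarrow\widetilde{V}^{k'}_{a}\subseteq\widetilde{V}^{k}_{a}$, which says that $\shoot{a}{{\bf {V}}}$ is a downward-directed family; this is precisely what lets a single index $k$ act as a common witness for both $A$ and $B$ in the reverse implication of part 2.
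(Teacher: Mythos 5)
Your proposal is correct: the paper states this lemma without proof (it is marked as immediate with a \qed), and your argument — unfolding the definitions of $\shoot{a}{{\bf {V}}}$ and $\inn$, and using the nestedness $k'\geq k\Rightarrow\widetilde{V}^{k'}_{a}\subseteq\widetilde{V}^{k}_{a}$ so that $\max(k_1,k_2)$ serves as a common witness in part~2 — is exactly the routine verification the authors intend. Nothing is missing.
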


\begin{prop} \label{stand_pi_sp}
 Let ${F}: \bset \to \taun \setminus \{\varnothing\}$ be such that ${x} \in \cl{{F}({x})}{\taun}$ for all ${x} \in \bset$. Then there exists a standard $\pi$-space $\ll \bset, \tau \rr$ such that $\{{x}\} \cup {F}({x}) \in \tau$ for all ${x} \in \bset$. 
 
 Moreover, if $\cl{{F}({x})}{\taun} = \{{x}\} \cup {F}({x})$ for all ${x} \in \bset$, then $\ll \bset, \tau \rr$ is zero-dimensional.
\end{prop}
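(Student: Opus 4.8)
The plan is to take $\tau$ to be the topology on $\bset$ generated by the subbase $\taun\cup\big\{\{x\}\cup F(x):x\in\bset\big\}$. Then at once $\taun\subseteq\tau$ and $\{x\}\cup F(x)\in\tau$ for every $x\in\bset$, so the required membership $\{x\}\cup F(x)\in\tau$ holds and every element of $\taun\setminus\{\varnothing\}$ is a nonempty $\tau$-open set. Hence the only real work is to check that $\taun\setminus\{\varnothing\}$ is a $\pi$-net for $\ll\bset,\tau\rr$, that is, that every nonempty $\tau$-open set contains a nonempty $\taun$-open set; combined with the openness just noted, this yields that $\taun\setminus\{\varnothing\}$ is a $\pi$-base and hence that $\ll\bset,\tau\rr$ is a standard $\pi$-space.

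Since $\taun$ is closed under finite intersections, every nonempty $\tau$-open set contains a nonempty basic set of the form
\[ B=W\cap\bigcap_{j=1}^{m}\big(\{p_j\}\cup F(p_j)\big), \]
where $W\in\taun$ and $p_1,\dots,p_m$ are distinct. I would prove that the $\taun$-open set $G\coloneq W\cap\bigcap_{j=1}^{m}F(p_j)$ witnesses the $\pi$-net property for $B$. The inclusion $G\subseteq B$ is immediate, so everything comes down to showing $G\neq\varnothing$ whenever $B\neq\varnothing$. Fixing a point $y\in B$: if $y\neq p_j$ for all $j$, then $y\in\{p_j\}\cup F(p_j)$ forces $y\in F(p_j)$ for each $j$, so $y\in G$; and if $y=p_{j_0}$ for some $j_0$, then distinctness yields $p_{j_0}\in F(p_j)$ for all $j\neq j_0$, so $O\coloneq W\cap\bigcap_{j\neq j_0}F(p_j)$ is a $\taun$-open neighbourhood of $p_{j_0}$.

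This second case is the heart of the matter, and it is precisely where the hypothesis $p_{j_0}\in\cl{F(p_{j_0})}{\taun}$ is needed: it forces $O\cap F(p_{j_0})\neq\varnothing$, and since $O\cap F(p_{j_0})=G$ we again obtain $G\neq\varnothing$. Thus in every case $G$ is a nonempty $\taun$-open subset of $B$, which completes the verification. The only mildly tedious point is the reduction of an arbitrary basic $\tau$-open set to the displayed normal form (merging repeated points into the distinct $p_j$); I expect no other obstacle.

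For the ``moreover'' clause, assume $\cl{F(x)}{\taun}=\{x\}\cup F(x)$ for all $x$. I would argue that each subbasic set is $\tau$-clopen. The sets $S_a$ are $\taun$-clopen, hence $\tau$-clopen since $\tau\supseteq\taun$, and they form a base for $\taun$; each $\{x\}\cup F(x)$ is $\tau$-open by construction and, being now $\taun$-closed by hypothesis, has $\taun$-open (hence $\tau$-open) complement, so it is $\tau$-clopen as well. A finite intersection of $\tau$-clopen sets is $\tau$-clopen, so the sets $S_a\cap\bigcap_{j=1}^{m}\big(\{p_j\}\cup F(p_j)\big)$ form a base of $\tau$ consisting of clopen sets. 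Finally, $\tau$ refines the metrizable (so Hausdorff) topology $\taun$ and is therefore Hausdorff, whence $\ll\bset,\tau\rr$ is zero-dimensional.
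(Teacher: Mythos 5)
Your proposal is correct and follows essentially the same route as the paper: the same subbase (the Baire topology together with the sets $\{x\}\cup F(x)$), the same witness $W\cap\bigcap_{j}F(p_j)$ for the $\pi$-base property, the same use of the hypothesis $x\in\cl{F(x)}{\taun}$ at a designated point lying in the basic set, and the same clopen-base argument for zero-dimensionality. The only differences are cosmetic: you verify nonemptiness by a direct case analysis on a chosen point of $B$ where the paper proves a set identity and argues by contradiction, and you add the (harmless, indeed welcome) observation that $\tau$ is Hausdorff because it refines $\taun$.
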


\begin{proof}
 Let $\tau$ be the topology on $\bset$ generated by the subbase $\{{S}_{a} : {a} \in \btree\} \cup \{\{{x}\} \cup {F}({x}) : {x} \in \bset\}$. First we need to show that  $\taun\setminus\{\varnothing\}$ is a $\pi\nos$-base for $\ll \bset, \tau \rr$.
 Take ${b} \in \btree$, ${n}\in\omega$, and ${x}_0, \dots, {x}_{{n}-1}$ are ${n}$ different points in $\bset$ such that
 \begin{equation} \label{not_noth}
  S_b \cap \bigcap_{{i}\in{n}}(\{{x}_{i}\}\cup{F}({x}_{i})) \neq \varnothing.
 \end{equation}
 We must prove that 
 \begin{equation} \label{main_purpose_of_pi}
  \int{S_b \cap \bigcap_{{i}\in{n}}(\{{x}_{i}\}\cup{F}({x}_{i}))}{\taun} \neq \varnothing.
 \end{equation}
 
 Denote
 \begin{equation}
  {A} \coloneq \{{i} \in {n} : {x}_{i} \in \bigcap_{{j}\in{{n}\setminus\{{i}\}}}{F}({x}_{j})\}.
 \end{equation}
 Note that $\bigcap_{{j}\in{{n}\setminus\{{i}\}}}{F}({x}_{j})$ is a neighbourhood of ${x}_{i}$ in $\taun$ for all ${i}\in{A}$. 
 Since ${x} \in \cl{{F}({x})}{\taun}$ for all ${x} \in \bset$, it follows that
 \begin{equation} \label{zhopa3}
  {x}_{i} \in \cl{\bigcap_{{j}\in{n}}{F}({x}_{j})}{\taun}\ \text{ for all }  {i} \in {A} .
 \end{equation}
 Also we have
 \begin{equation} \label{zhopa1}
  \bigcap_{{i}\in{n}}\big(\{{x}_{i}\}\cup{F}({x}_{i})\big) \:=\: \{{x}_{i} : {i} \in {A}\} \cup\, \bigcap_{{i}\in{n}}{F}({x}_{i}).
 \end{equation}
 To prove (\ref{zhopa1}), note that if ${x}_{k}\in \bigcap_{{i}\in{n}}(\{{x}_{i}\}\cup{F}({x}_{i}))$, then ${k}\in{A}$.
 
 Let us show  that
 \begin{equation} \label{zhopa2}
  {S}_{b} \cap \bigcap_{{i}\in{n}}{F}({x}_{i}) \neq \varnothing.
 \end{equation}
 Suppose that ${S}_{b} \cap \bigcap_{{i}\in{n}}{F}({x}_{i}) = \varnothing$. Then from (\ref{zhopa1}) and (\ref{not_noth}) it follows that ${S}_{b} \cap \{{x}_{i} : {i} \in {A}\} \neq \varnothing$. Take $x \in {S}_{b} \cap \{{x}_{i} : {i} \in {A}\}$. ${S}_{b}$ is a neighbourhood of ${x}$, so by (\ref{zhopa3}) it follows that ${S}_{b} \cap \bigcap_{{i}\in{n}}{F}({x}_{i}) \neq \varnothing$, a contradiction.
 
 Now, since
 $$
  \bigcap_{{i}\in{n}}{F}({x}_{i}) \in \taun,
 $$
 we see that (\ref{main_purpose_of_pi}) follows from (\ref{zhopa2}). 

 Now suppose that for all  ${x} \in \bset$,
 $$
     \cl{{F}({x})}{\taun} = \{{x}\} \cup {F}({x}).
$$
In this case, every set of the form as in (\ref{not_noth}) is closed in $\ll \bset, \tau \rr$. So, since these sets from a base for $\ll \bset, \tau \rr$, $\ll \bset, \tau \rr$ is zero-dimensional.
\end{proof}

\begin{lemm} \label{scheme_native}
 Let ${\bf {V}}$ be an open Souslin scheme on a standard $\pi$-space $\ll \bset, \tau \rr$. Then ${\bf {V}}$ is semi-open on $\bspace$.
\end{lemm}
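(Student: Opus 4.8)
The plan is to unwind the two definitions and exploit the fact that the hypotheses force $\tau$ to refine the Baire topology $\taun$. First I would observe that, by the definition of a standard $\pi$-space, $\taun \setminus \{\varnothing\}$ is a $\pi$-base for $\ll \bset, \tau \rr$; since every element of a $\pi$-base is by definition $\tau$-open, this yields $\taun \subseteq \tau$. Thus $\tau$ is finer than $\taun$, and in particular each basic Baire set $S_c$ (for $c \in \btree$) belongs to $\tau$. Recall also, from Remark~\ref{rem.baire.space}(a), that $\{S_{x \uph n} : n \in \omega\}$ is a $\taun$-neighbourhood base at any point $x$.

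Semi-openness of ${\bf V}$ on $\bspace$ asks, for each $a \in \btree$, that $V_a \subseteq \cl{\int{V_a}{\taun}}{\taun}$. So I would fix $a \in \btree$ and a point $x \in V_a$, and check that $x$ lies in this $\taun$-closure, i.e. that every basic $\taun$-neighbourhood $S_{x \uph n}$ of $x$ meets $\int{V_a}{\taun}$. The set $V_a \cap S_{x \uph n}$ is $\tau$-open (as $V_a \in \tau$ by the openness hypothesis and $S_{x \uph n} \in \taun \subseteq \tau$) and nonempty, since it contains $x$.

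Now I would invoke the $\pi$-base property directly: since $\taun \setminus \{\varnothing\}$ is a $\pi$-base for $\ll \bset, \tau \rr$, the nonempty $\tau$-open set $V_a \cap S_{x \uph n}$ contains some nonempty $G \in \taun$. As $G$ is $\taun$-open and $G \subseteq V_a$, we get $G \subseteq \int{V_a}{\taun}$; and $G \subseteq S_{x \uph n}$, so $\varnothing \neq G \subseteq S_{x \uph n} \cap \int{V_a}{\taun}$. Since this holds for every $n \in \omega$, the point $x$ belongs to $\cl{\int{V_a}{\taun}}{\taun}$; as $x \in V_a$ and $a \in \btree$ were arbitrary, ${\bf V}$ is semi-open on $\bspace$.

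I do not expect a genuine obstacle here: the only substantive point is recognising that the $\pi$-base hypothesis simultaneously gives $\taun \subseteq \tau$ (so that intersecting with $S_{x \uph n}$ keeps us inside $\tau$) and the approximation of nonempty $\tau$-open sets from inside by nonempty $\taun$-open sets. Everything else is routine bookkeeping with the definitions of interior, closure, and $\pi$-base.
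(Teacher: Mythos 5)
Your proof is correct and is essentially the paper's own argument in unfolded form: the paper states that $\int{{V}_{a}}{\taun}$ is dense in ${V}_{a}$ with respect to $\tau$ (by the $\pi$-base property) and hence dense with respect to $\taun$ (since $\taun\subseteq\tau$), which is exactly your pointwise verification with the basic neighbourhoods ${S}_{{x}\uph{n}}$. Both proofs rest on the same two observations --- $\taun\subseteq\tau$ and the approximation of nonempty $\tau$-open sets from inside by nonempty $\taun$-open sets --- so there is nothing further to add.
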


\begin{proof}
 We must show that if ${U}\in\tau$, then ${U}\subseteq \cl{\int{{U}}{\taun}}{\taun}$. The set $\int{{U}}{\taun}$ is dense in ${U}$ in the space $\ll \bset, \tau \rr$ because $\taun\setminus\{\varnothing\}$ is a $\pi\nos$-base for $\ll \bset, \tau \rr$. Then $\int{{U}}{\taun}$ is dense in ${U}$ in $\ll \bset, \taun \rr$ because $\taun\subseteq\tau$. 
 \end{proof}

\begin{nota}\label{not.S.N} Let ${\bf {V}} = \langle {V}_{a} \rangle_{{a} \in \btree}$ be a Souslin scheme and $\tau$ a topology. Then

  \begin{itemize}
 \item [\ding{46}\,]
  $\mathsf{Int}({\bf {V}},\tau)$ is a Souslin scheme $\ll {W}_{a} \rr_{{a} \in \btree}$ such that ${W}_{a} = \int{{V}_{a}}{\tau}$.

 \end{itemize}
\end{nota}

\begin{prop}\label{function.F}
There exists a function  ${F}\colon\bset\to\taun\!\setminus\!\{\varnothing\}$ that satisfies the following properties\textup{:}
\begin{itemize}
 \item[1.] For all ${x}$ in $\bset$,  $\cl{{F}({x})}{\taun}={F}({x})\cup\{{x}\}$\textup{.}
 \item[2.] Suppose that \({\mathbf{W}}\) is a regular open $\pi\nos$-base Souslin scheme on $\bspace$ \!and \,\({\mathbf{W}}\) has nonempty leaves. Then there exist ${p},{x}\in\bset$ such that
 \begin{itemize}
  \item[i.] $\fruit{{p}}{\mathbf{W}}=\{{x}\}$,
  \item[ii.] $\shoot{{p}\uph{n}}{\mathbf{W}}\ninn\{{x}\}\cup{F}({x})$ for all ${n}\in\omega$, and
  \item[iii.] for every regular semi-open Souslin scheme $\mathbf{{V}}$ on $\bspace$,

  if $\,\mathsf{Int}({\bf {V}},\taun)=\mathbf{W}$,
  then $\fruit{{p}}{\mathbf{{V}}}=\{{x}\}$.
 \end{itemize}
\end{itemize}
\end{prop}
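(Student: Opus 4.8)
The plan is to construct $F$ by transfinite recursion of length $\mathfrak c$, defeating one scheme per step. First I would note that $\taun$ has exactly $\mathfrak c$ open sets and $\btree$ is countable, so there are at most $\mathfrak c$ Souslin schemes with values in $\taun$; fix an enumeration $\langle\mathbf W^\xi\rangle_{\xi<\mathfrak c}$ of all regular open $\pi$-base schemes with nonempty leaves. Since clauses (i)--(iii) for $\mathbf W^\xi$ mention only one point $x_\xi$, one branch $p_\xi$, and the single value $F(x_\xi)$, the steps are mutually independent, and the only global bookkeeping is to keep the chosen points pairwise distinct. At every point $x$ not of the form $x_\xi$ I would put $F(x)\coloneq\bigsqcup_m S_{(x\uph m)^\frown j_m}$ with $j_m\neq x(m)$, a clopen fan with $\cl{F(x)}{\taun}=F(x)\cup\{x\}$, so that property 1 holds there automatically; all real work is at the chosen points.

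At step $\xi$, writing $\mathbf W=\mathbf W^\xi=\langle W_a\rangle$, I would first build the branch and its limit. Using nonempty leaves, regularity, and the $\pi$-net property, I recursively choose $b_0\sqsubset b_1\sqsubset\cdots$ in $\btree$ and nested cylinders $S_{c_0}\supseteq S_{c_1}\supseteq\cdots$ with $\lh(c_k)\to\infty$, $W_{b_k}\subseteq S_{c_k}$, and $S_{c_{k+1}}\subseteq W_{b_k}$: given $b_k$, pick $z_k\in W_{b_k}$, take a proper clopen subcylinder $S_{c_{k+1}}$ of $W_{b_k}$ around $z_k$ with $c_k\sqsubseteq c_{k+1}$, and apply the $\pi$-net of $\{W_b:b_k\sqsubseteq b\}$ for $W_{b_k}$ to get $b_{k+1}$ with $b_k\sqsubset b_{k+1}$ and $W_{b_{k+1}}\subseteq S_{c_{k+1}}$. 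Put $p=\bigcup_k b_k$ and $x=\bigcup_k c_k$. Then $x\in S_{c_{k+1}}\subseteq W_{b_k}$ for every $k$, so $x\in\bigcap_n W_{p\uph n}$, while $\bigcap_n W_{p\uph n}\subseteq\bigcap_k S_{c_k}=\{x\}$; hence $\fruit{p}{\mathbf W}=\{x\}$, which is (i). For (iii), any regular semi-open $\mathbf V$ with $\mathsf{Int}(\mathbf V,\taun)=\mathbf W$ satisfies $W_a\subseteq V_a\subseteq\cl{W_a}{\taun}$; since $W_{b_k}\subseteq S_{c_k}$ with $S_{c_k}$ clopen, $\cl{W_{p\uph n}}{\taun}$ decreases into $\bigcap_k S_{c_k}=\{x\}$, so $\{x\}\subseteq\fruit{p}{\mathbf V}\subseteq\bigcap_n\cl{W_{p\uph n}}{\taun}=\{x\}$, giving (iii) independently of $F$. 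Because each $W_{b_k}$ is a nonempty open subset of the perfect space $\bspace$, two incompatible choices of $z_k$ are always available, so the construction yields $\mathfrak c$ distinct limits $x$; as fewer than $\mathfrak c$ points are used before step $\xi$, I can take $x=x_\xi$ fresh.

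It remains to choose $F(x)$ to secure (ii) while keeping property 1. Unravelling the definition of $\shoot{}{}$ (cf.\ Lemma~\ref{shoot2}), the requirement $\shoot{p\uph n}{\mathbf W}\ninn\{x\}\cup F(x)$ for all $n$ is equivalent, with $G\coloneq\bset\setminus(\{x\}\cup F(x))$, to: for every $n$ the set $\{j:W_{(p\uph n)^\frown j}\cap G\neq\varnothing\}$ is infinite. I would therefore seek $F(x)$ and $G$ as a partition of $\bset\setminus\{x\}$ into two $\taun$-open sets with $x\in\cl{F(x)}{\taun}$, which yields property 1. Any child $W_{(p\uph n)^\frown j}$ containing $x$ is a neighbourhood of $x$ and meets $G$ once $x\in\cl{G}{\taun}$; every $x$-avoiding child misses some $S_{x\uph m_0}$ and hence lies in finitely many of the annuli $A_m\coloneq S_{x\uph m}\setminus S_{x\uph(m+1)}$. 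Using the canonical clopen partition $\bset\setminus\{x\}=\bigsqcup_{m,\,v\neq x(m)}S_{(x\uph m)^\frown v}$, I would colour almost every cylinder $G$ and reserve, at an infinite set of levels, one cylinder for $F$ (so that $x\in\cl{F(x)}{\taun}$ and $x\in\cl{G}{\taun}$), arranging that for every $x$-avoiding tail $\widetilde W^k_{p\uph n}$ at least one of its finitely many cylinders is left in $G$.

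The main obstacle is precisely this colouring. It cannot be done for arbitrary open sets: if the tails included every cylinder missing $x$, then the nonempty open set $F(x)\subseteq\bset\setminus\{x\}$ would contain a cylinder and hence a whole tail. Thus the heart of the proof is to use the hypotheses on $\mathbf W$ -- regularity, nonempty leaves, and the $\pi$-net property -- together with the branch nesting $W_{p\uph n}\subseteq S_{c_{k(n)}}$, $\lh(c_{k(n)})\to\infty$, to rule out the dangerous configuration in which the $x$-avoiding tails are each confined to a single cylinder and collectively saturate cofinitely many levels. Concretely, I would run a recursion that at each step serves one further child of each node treated so far into $G$ (possible since each child is a nonempty open, hence infinite, set) while adding one fresh deep cylinder to $F$, keeping these finitely many clopen reservations pairwise disjoint. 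The delicate point, for which the regular $\pi$-base structure is essential and which I expect to be the crux, is to show that this recursion never gets stuck -- that infinitely many levels remain available for $F$ and that every node keeps contributing children to $G$. Granting this, $F(x)$ has all three required properties, completing step $\xi$ and the whole construction.
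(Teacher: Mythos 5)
Your set-up is sound and, in fact, coincides with the paper's: the enumeration of the (at most) $\mathfrak c$ regular open $\pi$-base schemes, the interleaved construction $S_{c_k}\supseteq W_{b_k}\supseteq S_{c_{k+1}}$ with $p=\bigcup_k b_k$ and $x=\bigcup_k c_k$, the derivation of (i) from regularity, the derivation of (iii) from semi-openness plus clopenness of cylinders, and the freshness argument via continuum many pairwise incompatible branches (the paper formalizes this as a Cantor scheme $\ll a_e\rr_{e\in\ctree}$, $\ll b_e\rr_{e\in\ctree}$) are all correct. The genuine gap is the last step, and you flag it yourself: the construction of $F(x)$ satisfying property 1 together with (ii) ends with ``Granting this''. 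That is not a routine verification to be granted --- it is the entire content of the proposition, since (i) and (iii) need no diagonalization against previously used points at all. Worse, the one concrete claim you make en route is false: an $x$-avoiding child $W_{(p\uph n)^{\frown}j}$ need \emph{not} miss any cylinder $S_{x\uph m_0}$, because an open subset of $\bspace$ can accumulate at $x$ without containing $x$ (e.g.\ $\bset\setminus\{x\}$); so such children need not lie in finitely many annuli. This is exactly the ``dangerous configuration'' your sketched recursion would have to defeat, and you give no argument that it can; the worry that the recursion ``gets stuck'' is well founded and unresolved.

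For comparison, the paper's resolution of this step avoids any stuck/unstuck analysis. It fixes in advance, for each level $k$, a fan $H^{-1}_k=\bigcup_{j\in E^{-1}_k}S_{{c}_{k}{}^{\frown}j}$ of cylinders hanging off the branch to $x$; each fan is open, closed, and contained in $S_{c_k}\setminus S_{c_{k+1}}$. Then, by recursion on $n\in\omega$, every surviving fan is split into two disjoint infinite sub-fans, producing disjoint sets $G(0),G(1)$ with $\{x\}=(\{x\}\cup G(0))\cap(\{x\}\cup G(1))$; since tails of an open scheme with nonempty leaves are nonempty open sets, $\shoot{p\uph n}{\mathbf{W}}\ninn\{x\}$, so Lemma~\ref{shoot2}(2) allows one to keep a half with $\shoot{p\uph n}{\mathbf{W}}\ninn\{x\}\cup\bigcup_k H^{n}_k$. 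The final set is the diagonal $U=\bigcup_k H^{\lh(d_{k+1})}_k$: property 1 holds because the fans are closed and shrink into the cylinders $S_{c_k}$, and (ii) holds because, for a fixed $n$ with $\lh(d_j)\leqslant n<\lh(d_{j+1})$, regularity localizes every tail of $\shoot{p\uph n}{\mathbf{W}}$ inside $S_{c_j}$, the fans with $k<j$ are disjoint from $S_{c_j}$, and the fans with $k\geqslant j$ are contained in $\bigcup_k H^{n}_k$, contradicting the invariant. The key point you are missing is that each halving step treats a single $n$ (and all tails $k$ at once), with all reservations kept inside one pre-chosen fan, so no interaction between levels ever has to be controlled.
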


\begin{proof}
There are at most continuum open Souslin schemes on $\bspace$, so we may assume that
$\{\mathbf{W}^{\alpha}:\alpha<\mathfrak{c}\}$ is the set of all regular open $\pi\nos$-base Souslin schemes on $\bspace$ that have nonempty leaves.

We will build transfinite sequences $\ll{p}^{\alpha}\rr_{\alpha<\mathfrak{c}}$ and
 $\ll{x}^{\alpha}\rr_{\alpha<\mathfrak{c}}$ in $\bset$ and a transfinite sequence  $\ll{U}^{\alpha}\rr_{\alpha<\mathfrak{c}}$ in $\taun\setminus\{\varnothing\}$
such that, for all $\alpha\in\mathfrak{c}$,
  \begin{itemize}
  \item[a1.]
 ${x}^{\alpha}\neq{x}^{\beta}$ for all $\beta\in\mathfrak{c}\setminus\{\alpha\}$;
  \item[a2.]
 $\cl{{U}^{\alpha}}{\taun}={U}^{\alpha}\cup\{{x}^{\alpha}\}$;
  \item[a3.]
 $\fruit{{p}^{\alpha}}{\mathbf{W}^{\alpha}}=\{{x}^{\alpha}\}$;
  \item[a4.]
 $\shoot{{p}^{\alpha}\uph{n}}{\mathbf{W}^{\alpha}}\ninn\{{x}^{\alpha}\}\cup{U}^{\alpha}$ for all ${n}\in\omega$;
  \item[a5.]
 for every regular semi-open Souslin scheme $\mathbf{{V}}$ on $\bspace$,

 if $\mathsf{Int}({\bf {V}},\taun)=\mathbf{W}^{\alpha}$,
 then $\fruit{{p}^{\alpha}}{\mathbf{{V}}}=\{{x}^{\alpha}\}$.
\end{itemize}

It is easy to prove that conditions (a1)--(a5) imply the assertion of Proposition~\ref{function.F}. The transfinite sequence  $\ll{x}^{\alpha}\rr_{\alpha<\mathfrak{c}}$ is injective by (a1), so we may define function ${F}\colon\bset\to\taun\setminus\{\varnothing\}$ as follows: for all $\alpha<\mathfrak{c}$, we set ${F}({x}^{\alpha})\coloneq{U}^\alpha$, and for all ${x}\in\bset\setminus\{{x}^\alpha:\alpha<\mathfrak{c}\}$, we set ${F}({x})\coloneq\bset$. It is straightforward to show that ${F}$ satisfies required conditions.

It rem{a}ins to build the transfinite sequences $\ll{p}^{\alpha}\rr_{\alpha<\mathfrak{c}}$,
$\ll{x}^{\alpha}\rr_{\alpha<\mathfrak{c}}$, and $\ll{U}^{\alpha}\rr_{\alpha<\mathfrak{c}}$; we will build them by recursion on $\alpha<\mathfrak{c}$. Assume that we have chosen  ${p}^\beta$, ${x}^{\beta}$, and ${U}^\beta$ for all $\beta<\alpha$ in such a way that conditions (a1)--(a5) are satisfied.

Let ${a}_{\ll\rr}\coloneq{b}_{\ll\rr}\coloneq{\ll\rr}\in\btree$. We have ${S}_{{a}_{\ll\rr}}\supseteq{W}^{\alpha}_{{b}_{\ll\rr}}$. Since $\mathbf{W}^{\alpha}$ is open and has nonempty leaves, we can choose ${a}_{\ll\rr^{\frown}0}$ and ${a}_{\ll\rr^{\frown}1}$ in $\btree$ such that
\[{W}^{\alpha}_{{b}_{\ll\rr}}\supseteq{S}_{{a}_{\ll\rr^{\frown}0}},\quad
{W}^{\alpha}_{{b}_{\ll\rr}}\supseteq{S}_{{a}_{\ll\rr^{\frown}1}},\]
\[{S}_{{a}_{\ll\rr^{\frown}0}}\cap{S}_{{a}_{\ll\rr^{\frown}1}}=\varnothing,\]
\[{a}_{\ll\rr}\sqsubset{a}_{{\ll\rr}^{\frown}0},\quad\text{and}\quad
{a}_{\ll\rr}\sqsubset{a}_{{\ll\rr}^{\frown}1}.\]
Since $\mathbf{W}^{\alpha}$ is a $\pi\nos$-base Souslin scheme on the  Baire space, we can choose ${b}_{\ll\rr^{\frown}0}$ and ${b}_{\ll\rr^{\frown}1}$ in $\btree$ such that
\[{S}_{{a}_{\ll\rr^{\frown}0}}\supseteq{W}^{\alpha}_{{b}_{\ll\rr^{\frown}0}},\quad
{S}_{{a}_{\ll\rr^{\frown}1}}\supseteq{W}^{\alpha}_{{b}_{\ll\rr^{\frown}1}},\]
\[{b}_{\ll\rr}\sqsubset{b}_{{\ll\rr}^{\frown}0},\quad\text{and}\quad
{b}_{\ll\rr}\sqsubset{b}_{{\ll\rr}^{\frown}1}.\]

Proceeding this way, we will build two indexed families $\ll{a}_{e}\rr_{{e}\in\ctree}$ and $\ll{b}_{e}\rr_{{e}\in\ctree}$ of elements of $\btree$ such that

\begin{itemize}
\item[b1.]
 ${S}_{{a}_{{t}\uph{n}}}\supseteq
 {W}^{\alpha}_{{b}_{{t}\uph{n}}}\supseteq
 {S}_{{a}_{{t}\uph({n}+1)}}$ for all ${t}\in\cset$ and  ${n}\in\omega$;
\item[b2.]
 ${a}_{{t}\uph{n}}\sqsubset{a}_{{t}\uph({n}+1)}$ for all ${t}\in\cset$ and  ${n}\in\omega$;
\item[b3.]
 ${b}_{{t}\uph{n}}\sqsubset{b}_{{t}\uph({n}+1)}$ for all ${t}\in\cset$ and  ${n}\in\omega$;
\item[b4.]
 ${S}_{{a}_{{e}^{\frown}0}}\cap{S}_{{a}_{{e}^{\frown}1}}=\varnothing$ for all ${e}\in\ctree$.
\end{itemize}

Note that
\begin{equation}\label{SeqW}
 \bigcap_{{n}\in\omega}{S}_{{a}_{{t}\uph{n}}} = \bigcap_{{n}\in\omega}{W}^{\alpha}_{{b}_{{t}\uph{n}}} \text{ for all } {t}\in\cset\,.
\end{equation}
Also it follows from (b4) and (b2) that
\begin{equation*}
\bigcap_{{n}\in \omega}{S}_{{a}_{{t}\uph{n}}} \cap \bigcap_{{n}\in \omega}{S}_{{a}_{{u} \uph {n}}} = \varnothing
 \text{ for all } {t}\neq{u}\in\cset\text{ and}
\end{equation*}
\begin{equation*}
 \bigcap_{{n}\in\omega}{S}_{{a}_{{t}\uph{n}}} \text{ is a singleton for all } {t}\in\cset\,.
\end{equation*}

It follows that there exists $\dot{t}\in\cset$ such that
\begin{equation}\label{x_alpha}
 \bigcap_{{n}\in \omega}{S}_{{a}_{\dot{t} \uph {n}}} \neq \{{x}^\beta\} \text{ for all } \beta < \alpha.
\end{equation}
Let
\begin{equation*}
{c}_{n}\coloneq{a}_{\dot{t}\uph{n}}
\qquad\text{and}\qquad
{d}_{n}\coloneq{b}_{\dot{t}\uph{n}}
\end{equation*}
for all ${n}\in\omega$.
Then (b1)--(b4) imply
\begin{itemize}
\item[c1.]
 ${S}_{{c}_{n}}\supseteq
 {W}^{\alpha}_{{d}_{n}}\supseteq
 {S}_{{c}_{{n}+1}}$ for all ${n}\in\omega$;
\item[c2.]
 ${c}_{n}\sqsubset{c}_{{n}+1}$ for all ${n}\in\omega$;
\item[c3.]
 ${d}_{n}\sqsubset{d}_{{n}+1}$ for all ${n}\in\omega$.
\end{itemize}

By (c3) we have
\begin{equation}\label{c4}
 \lh({d}_{n})<\lh({d}_{{n}+1})\text{ for all } {n}\in\omega.
\end{equation}
Recall that a sequence, being a function, is a set of ordered pairs, so (c2) and (c3) says that  ${c}_{n}\subset{c}_{{n}+1}$ and ${d}_{n}\subset{d}_{{n}+1}$ for all ${n}\in\omega$.

Now we can define ${x}^\alpha$ and ${p}^\alpha$. Put
\begin{equation*}{x}^\alpha \coloneq \bigcup_{{n}\in\omega}{c}_{n}\in\bset
\qquad\text{and}\qquad
{p}^\alpha \coloneq \bigcup_{{n}\in \omega}{d}_{{n}}\in\bset.
\end{equation*}
Note that
\begin{equation}\label{def.x_alpha}
\{{x}^\alpha\} = \bigcap_{{n}\in\omega}{S}_{{c}_{n}}.
\end{equation}
It follows from (\ref{x_alpha}) and (\ref{def.x_alpha}) that ${x}^\alpha\neq{x}^\beta$ for all $\beta<\alpha$, so (a1) is satisfied.

By (\ref{def.x_alpha}), (\ref{SeqW}), regularity of the scheme $\mathbf{{W}^\alpha}$, and (\ref{c4}) we have
$$
\{{x}^\alpha\}
=
\bigcap_{{n}\in\omega}{S}_{{c}_{n}}
=
\bigcap_{{n}\in\omega}{W}^{\alpha}_{{d}_{n}}
=
\bigcap_{{n}\in\omega}{W}^{\alpha}_{{p}^\alpha \uph\lh({d}_{n})}
=
\fruit{{p}^{\alpha}}{\mathbf{W}^{\alpha}},
$$
therefore (a3) is satisfied.

Let $\mathbf{{V}}$ be a regular semi-open Souslin scheme on $\bspace$ such that $\mathsf{Int}({\bf {V}},\taun)=\mathbf{W}^{\alpha}$. 
Then by (\ref{def.x_alpha}), (\ref{SeqW}), (\ref{c4}), and (c1), we have
\\
$$
\{{x}^\alpha\}
=
\bigcap_{{n}\in\omega}{S}_{{c}_{n}}
=
\bigcap_{{n}\in\omega}{W}^{\alpha}_{{d}_{n}}
=
\bigcap_{{n}\in\omega} \int{{V}_{{d}_{n}}}{\taun}
\subseteq
$$
$$
\subseteq
\bigcap_{{n}\in\omega}{V}_{{d}_{n}}
=
\bigcap_{{n}\in\omega}{V}_{{p}^\alpha\uph\lh({d}_{n})}
=
\mathsf{fruit}_{\bf {V}}({p}^\alpha)
=
\bigcap_{{n}\in\omega}{V}_{{p}^\alpha\uph\lh({d}_{n})}
=
$$
$$
=
\bigcap_{{n}\in\omega} {V}_{{d}_{n}}
\subseteq
\bigcap_{{n}\in\omega}
\cl{\int{{V}_{{d}_{n}}}{\taun}}{\taun}
=
\bigcap_{{n}\in\omega}\cl{{W}^{\alpha}_{{d}_{n}}}{\taun}
\subseteq
$$
$$
\subseteq
\bigcap_{{n}\in\omega}\cl{{S}_{{c}_{n}}}{\taun}
=
\bigcap_{{n}\in\omega} {S}_{{c}_{n}}
=
\{{x}^\alpha\},
$$
so (a5) is satisfied.

It remains to define ${U}^\alpha$ and prove (a2) and (a4). We will build two families: an indexed family
$$
\ll {E}^{n}_{k}:{k}\in\omega,{n}\in\omega\cup\{-1\}\rr
$$
of subsets of $\omega$ and an indexed family
$$
\ll {H}^{n}_{k}:{k}\in\omega,{n}\in\omega\cup\{-1\}\rr
$$
of subsets of $\bset$. These families will possess the following properties:

\begin{itemize}
\item[d1.]
 ${H}^{n}_{k} = \bigcup_{{j}\in{E}^{n}_{k}} {S}_{{c}_{k}{}^{\frown} {j} }$ for all ${k}\in\omega$ and ${n}\in\omega \cup \{-1\}$;
\item[d2.]
 $\shoot{{p}^{\alpha}\uph{n}}{\mathbf{W}^{\alpha}}\mathbin{\not\to} \{{x}^\alpha\} \cup \bigcup_{{k}\in\omega}{H}^{n}_{k}$ for all ${n}\in\omega$;
\item[d3.]
 ${E}^{n}_{k}$ is infinite for all ${k}\in\omega$ and ${n}\in\omega \cup \{-1\}$;
\item[d4.]
 ${H}^{n}_{k} \neq \varnothing$ for all ${k}\in\omega$ and ${n}\in\omega \cup \{-1\}$;
\item[d5.]
 ${H}^{n}_{k} \subseteq {H}^{{n}-1}_{k}$ for all ${k}\in\omega$ and ${n}\in\omega$;
\item[d6.]
 ${H}^{-1}_{k} \subseteq\: {S}_{{c}_{k}}\! \setminus {S}_{{c}_{{k}+1}}$ for all ${k}\in\omega$;
\item[d7.]
 ${H}^{-1}_{k}\cap{S}_{{c}_{j}} = \varnothing$ for all ${k},{j}\in\omega$ such that ${k}<{j}$;
\item[d8.]
 the family $\ll {H}^{n}_{k} \rr_{{k}\in\omega}$ is disjoint  for all ${n}\in\omega\cup \{-1\}$.
\end{itemize}
Using these families, we can define ${U}^\alpha$ as follows:
$$
{U}^\alpha \coloneq \bigcup_{{k}\in\omega} {H}^{\lh({d}_{k+1})}_{k}.
$$

Let us prove (a2).
Since ${x}^\alpha = \bigcup_{{k}\in\omega} {c}_{k}$, it follows that $\{{S}_{{c}_{k}} : {k}\in\omega\}$ is {a} neighbourhood base at ${x}^\alpha$ in the Baire space.
Using (d6), (d5), and (d4) we have
$$
 {S}_{{c}_{k}} \supseteq  {H}^{-1}_{k}
 \supseteq {H}^{\lh({d}_{k+1})}_{k} \neq \varnothing \text{ for all } {k}\in\omega,
$$
and so
$$\forall U \in \taun({x}^\alpha)\ \exists n \in \omega\ \forall k > n\ [{H}^{\lh({d}_{k+1})}_{k} \subseteq U].
$$
Now from (d1) it follows that ${H}^{\lh({d}_{k+1})}_{k}$ is closed in $\bspace$ for all $k\in\omega$, and so $\cl{{U}^{\alpha}}{\taun} = {U}^{\alpha} \cup \{{x}^\alpha\}$.

Now we  prove (a4). Let ${n}\in\omega$; we need to show that
$$
\shoot{{p}^{\alpha}\uph{n}}{\mathbf{W}^{\alpha}}\ninn\{{x}^{\alpha}\}\cup{U}^{\alpha}.
$$
Recall that
${d}_{0}={b}_{\dot{t}\uph{0}}=
{b}_{\ll\rr}={\ll\rr}$. It follows from (c3) that there is ${j}\in\omega$ such that
\begin{equation}\label{{j}} \lh({d}_{{j}})\leqslant{n}<\lh({d}_{{j}+1}).
\end{equation}
Let
$$
{A}\coloneq\bigcup_{{k} < {j}}{H}^{-1}_{k}
\quad\text{and}\quad
{B}\coloneq\{{x}^\alpha\} \cup \bigcup_{{k}\geqslant {j}}{H}^{\lh({d}_{{k}+1})}_{k}.
$$
By (d5) we have
$\{{x}^\alpha\} \cup {U}^\alpha\subseteq{A}\cup{B}$,
so using  Lemma \ref{shoot2}(1) it is enough to show that
$\shoot{{p}^\alpha\uph{n}}{\mathbf{W}^\alpha}\not\to{A}\cup{B}$.

Suppose on the contrary that $\shoot{{p}^\alpha \uph {n}}{\mathbf{W}^\alpha} \to{A}\cup{B}$.
By regularity of the scheme ${W}^{\alpha}$, (\ref{{j}}), the definition of ${p}_
\alpha$, and (c1) we have
$$
\shoot{{p}^\alpha\uph{n}}{\mathbf{W}^\alpha}\mathbin{\to}
{W}^{\alpha}_{{p}^{\alpha}\uph{n}}
\,\subseteq\,
{W}^{\alpha}_{{p}^{\alpha}\uph\lh({d}_{{j}})}
\,=\,
{W}^{\alpha}_{{d}_{{j}}}
\,\subseteq\,
{S}_{{c}_{{j}}}.
$$
Then
$\shoot{{p}^\alpha\uph{n}}{\mathbf{W}^\alpha}\to{S}_{{c}_{{j}}}$, so using Lemma \ref{shoot2}(2) and (d7) we have
$$
\shoot{{p}^\alpha\uph {n}}{\mathbf{W}^\alpha}\to{S}_{{c}_{{j}}}\cap({A}\cup{B})
\,=\,
({S}_{{c}_{{j}}}\cap{A})\cup({S}_{{c}_{{j}}}\cap{B})
\,=\,
$$
$$
\,=\,
\varnothing\cup({S}_{{c}_{{j}}}\cap{B})
\,=\,
{S}_{{c}_{{j}}}\cap{B},
$$
so 
$\shoot{{p}^\alpha \uph {n}}{\mathbf{W}^\alpha}\to{B}$ by Lemma \ref{shoot2}(2).
It follows from (d5), (c3), and (\ref{{j}}) that
$$
{B}\,=\,
\{{x}^\alpha\} \cup \bigcup_{{k}\geqslant {j}}{H}^{\lh({d}_{{k}+1})}_{k}
\,\subseteq\,
\{{x}^\alpha\} \cup \bigcup_{{k} \geqslant {j}}{H}^{\lh({d}_{{j}+1})}_{k}
\,\subseteq\,
$$
$$
\,\subseteq\,\{{x}^\alpha\} \cup \bigcup_{{k} \geqslant {j}}{H}^{n}_{k}
\,\subseteq\,
\{{x}^\alpha\} \cup \bigcup_{{k}\in\omega}{H}^{n}_{k},
$$
therefore 
$\shoot{{p}^\alpha \uph {n}}{\mathbf{W}^\alpha} \mathbin{\to}\{{x}^\alpha\} \cup \bigcup_{{k}\in\omega}{H}^{n}_{k}$, which contradicts (d2).

It remains to build the indexed families
$$\ll {E}^{n}_{k} : {k}\in\omega, {n}\in\omega \cup \{-1\}\rr
\quad\text{and}\quad
\ll {H}^{n}_{k} : {k}\in\omega, {n}\in\omega \cup \{-1\}\rr.$$
We do it by recursion on ${n}\in\omega\cup\{{-}1\}$.
Recall that ${x}^\alpha\colon\omega\to\omega$ and note that 
$$
{c}_{k}=\big\ll{x}^\alpha({0}),\ldots,{x}^\alpha\big(\lh({c}_{k})-1\big)\big\rr.
$$ When ${n}=-1$, for all ${k}\in\omega$, we put
$$
{E}^{-1}_{k} \coloneq \big\{{j}\in\omega : j > {x}^\alpha\big(\lh({c}_{k})\big)\big\}
\quad\text{and}\quad
{H}^{-1}_{k} \coloneq \bigcup_{{j}\in{E}^{-1}_{k}} {S}_{{c}_{k}{}^{\frown}{j}}.
$$
Note that these sets satisfy properties (d1)--(d8) for ${n}=-1$; in particular, (d6)--(d8) follow from (c2).

Now suppose that the sets ${E}^{l}_{k}$ and ${H}^{l}_{k}$ have been chosen for all ${l} < {n}$ and ${k}\in\omega$, and that they satisfy (d1)--(d8).
Using (d4), for all ${k}\in\omega$, fix a disjoint pair of infinite sets ${E}^{{n}-1}_{k}(0)$ and ${E}^{{n}-1}_{k}(1)$ such that
$$
{E}^{{n}-1}_{k} = {E}^{{n}-1}_{k}(0) \cup {E}^{{n}-1}_{k}(1).
$$
Now, for ${m}\in\{0, 1\}$, set
$$
{H}^{{n}-1}_{k}({m}) \coloneq \bigcup_{{j}\in{E}^{{n}-1}_{k}({m})} {S}_{{c}_{k}{}^{\frown} {j} }
\quad\text{and}\quad
{G}({m}) \coloneq \bigcup_{{k}\in\omega}{H}^{{n}-1}_{k}({m}).
$$
We have ${H}^{{n}-1}_{k}={H}^{{n}-1}_{k}(0)\cup{H}^{{n}-1}_{k}(1)$ and
${H}^{{n}-1}_{k}(0)\cap{H}^{{n}-1}_{k}(1)=\varnothing$
for all ${k}\in\omega$, so it follows from (d8) that
$$
 {G}(0) \cap {G}(1) = \varnothing.
$$

The scheme ${W}^\alpha$ is open on the Baire space and has nonempty leaves, so $\shoot{{p}^\alpha\uph{n}}{\mathbf{W}^\alpha}\not\to \{{{x}^\alpha}\}$.
We have $\{{{x}^\alpha}\}=\big(\{{{x}^\alpha}\}\cup{G}(0)\big)\cap\big(\{{{x}^\alpha}\}\cup{G}(1)\big)$,
so it follows from Lemma~\ref{shoot2}(2) that
$$
 \shoot{{p}^\alpha \uph {n}}{\mathbf{W}^\alpha} \not\to \{{{x}^\alpha}\}\cup{G}(\dot{m})
$$
for some $\dot{m}\in\{0, 1\}$.
Put
$$
{E}^{n}_{k}\coloneq{E}^{{n}-1}_{k}(\dot{m})
\quad\text{and}\quad
{H}^{n}_{k}\coloneq{H}^{{n}-1}_{k}(\dot{m}).
$$
It is straightforward to check that conditions (d1)--(d8) are satisfied.
\end{proof}

\begin{proof}[\textbf{Proof of Theorem~\ref{teo.main}}]
 We must find a zero-dimensional $\pi$-space that is not a continuous open image of a space with a Lusin $\pi$-base.
 
 Let ${F}$ be the function from Proposition \ref{function.F}. It follows from Proposition \ref{stand_pi_sp} that  there exists a zero-dimensional standard $\pi$-space $\ll \bset, \tau \rr$ such that ${F}({x}) \cup \{{x}\} \in \tau$ for all ${x} \in \bset$.

 We will show that $\ll \bset, \tau \rr$ has no $\alpha$-scheme. Then $\ll \bset, \tau \rr$ is not a continuous open image of a space with a Lusin $\pi$-base by Proposition \ref{desc_open_LPB}.

 Assume by contradiction that ${\bf {V}}$ is an $\alpha$-scheme for $\ll \bset, \tau \rr$. By Lemma \ref{scheme_native}, ${\bf {V}}$ is a semi-open Souslin scheme on $\bspace$ and by Proposition \ref{LP_pi_base}, ${\bf {V}}$ is a $\pi$-base Souslin scheme on $\ll \bset, \tau \rr$.

 Since $\ll \bset, \tau \rr$ is a standard $\pi$-space and ${\bf {V}}$ is a regular $\pi$-base open Souslin scheme with nonempty leaves on $\ll \bset, \tau \rr$ it follows that ${\bf {W}} \coloneq \mathsf{Int}({\bf {V}},\taun)$ is a regular open $\pi$-base Souslin scheme on $\bspace$ and ${\bf {W}}$ has nonempty leaves. By the choice of ${F}$, there exist ${p}, {x} \in \bset$ such that
 \begin{itemize}
  \item[i.] $\fruit{{p}}{{\bf {W}}} = \{{x}\}$;
  \item[ii.] $\shoot{{p}\uph{n}}{{\bf {W}}}\ninn\{{x}\}\cup{F}({x})$ for all ${n}\in\omega$;
  \item[iii.] $\fruit{{p}}{\mathbf{{V}}} = \{{x}\}$.
 \end{itemize}

 Let us show that ${\bf {V}}$ does not satisfy condition (S2) of the definition of $\alpha\nos$-scheme. 
 Using (iii), it is enough to show that the formula ${p}\xrightarrow{\mathbf{V}\!,\tau} {x}$ is not true.
 From (ii) it follows that
 \[
  \shoot{{p}\uph{n}}{{\mathbf{{V}}}}\ninn\{{x}\}\cup{F}({x}) \text{ for all } {n} \in\omega.
 \]
 It remains to note that ${F}({x}) \cup \{{x}\} \in \tau({x}).$
\end{proof}

\begin{ques}
Does there exist a Hausdorff compact space that is a continuous open image of a $\pi$-space but is not a continuous open image of a space with a Lusin $\pi$-base?
\end{ques}

\bibliographystyle{plain}
\bibliography{books.bib}
\end{document}